\documentclass[11pt]{article}
\usepackage[T1]{fontenc}
\usepackage{lmodern}
\usepackage{amsmath,amssymb,amsthm,mathtools}
\usepackage{booktabs}
\usepackage{enumitem}
\usepackage{microtype}
\usepackage[margin=1.08in]{geometry}
\usepackage[hidelinks]{hyperref}
\usepackage{xurl}

\newtheorem{theorem}{Theorem}[section]
\newtheorem{proposition}[theorem]{Proposition}
\newtheorem{lemma}[theorem]{Lemma}

\theoremstyle{definition}

\title{Gaussian Vertex--Face Balance in Random Convex Polyhedra\\with Fixed Edge Count}
\author{Lachlan Bridges\\[3pt]
\small Independent researcher, Adelaide, South Australia, Australia\\
\small \href{mailto:lachlanjbridges@gmail.com}{lachlanjbridges@gmail.com}}
\date{16 September 2026}

\begin{document}
\maketitle

\begin{abstract}
Choose uniformly among the combinatorial types of convex three-dimensional
polyhedra with a fixed admissible number $e$ of edges, and let $V_e$ be the
number of vertices.  This resolves a fixed-edge limit-distribution question
posed by R\"udinger: if $\beta_e=V_e/(e+2)$, then
$\sqrt e(\beta_e-1/2)\Rightarrow N(0,1/32)$, equivalently
$\operatorname{Var}(V_e)\sim e/32$.  Using the classical rooted enumeration and asymmetry results of Bender and
Wormald, we derive a relative lattice local limit
theorem on every $o(e^{3/4})$ window, a quartic correction from the rate
function on every $o(e^{5/6})$ window, precise moderate-tail constants, a
quadratic moderate-deviation principle for every $a_e\to\infty$ with
$a_e=o(\sqrt e)$, and a full speed-$e$ large-deviation principle with an
explicit good rate function.  All fixed standardised moments converge.  When
$e=3m$, the two extremal vertex counts have equal probability asymptotic to
$\frac{6561}{32\sqrt2}(4/27)^m$.  Rooted and unrooted fixed-edge laws are also
uniformly exponentially close, with relative discrepancy $O(\rho^e)$.
\end{abstract}

\noindent\textbf{Keywords:} convex polyhedra; planar maps; asymptotic enumeration;
local limit theorem; large deviations; discrete probability.\\[4pt]
\noindent\textbf{2020 Mathematics Subject Classification:}
Primary 60C05; Secondary 05C30, 52B05, 52B10.

\bigskip
\section{Introduction and main results}\label{sec:intro}

Fix the number of edges of a convex three-dimensional polyhedron and choose
uniformly among its combinatorial types.  How balanced are the numbers of
vertices and faces?  R\"udinger posed this fixed-edge distribution problem in
2009 and 2010, using the form parameter
\[
  \beta=\frac{V}{e+2}
\]
and asking in particular whether a non-trivial limiting law appears after
rescaling \cite{Ruedinger2009,Ruedinger2010}.  Numerical data suggested
concentration around $1/2$, but did not determine the fluctuation scale.

We answer that question explicitly.  If $V_e$ denotes the number of vertices
of a uniformly chosen combinatorial type with $e$ edges, then
\begin{equation}
  \sqrt e\left(\frac{V_e}{e+2}-\frac12\right)
  \Rightarrow N(0,1/32).
  \label{eq:ruedinger-answer}
\end{equation}
Equivalently, the centred vertex count has variance asymptotic to $e/32$.
Beyond the central limit, the same fixed-edge analysis yields a relative
lattice local limit theorem, a quartic correction from the rate-function
expansion, precise moderate-tail constants, a moderate-deviation principle on
every scale $a_e\to\infty$ with $a_e=o(\sqrt e)$, a full speed-$e$
large-deviation principle, convergence of all fixed moments, and sharp
probabilities at the two extremal vertex counts when $3\mid e$.

The enumerative inputs are classical.  Bender and Wormald obtained a uniform
two-parameter asymptotic for rooted $3$-connected planar maps
\cite{BenderWormald1988}, while their earlier asymmetry theorem gives uniform
control of the passage from rooted to unrooted maps
\cite{BenderWormald1985}.  We first make that dependency explicit and then carry
out the fixed-edge Laplace analysis.  The resulting probability asymptotics range from central
Gaussian fluctuations through large deviations to extremal configurations.

The results below require estimates that remain uniform as the fluctuation
scale grows from
$O(\sqrt e)$ through moderate deviations, while also controlling the shifted
integer/half-integer lattice and the fixed-density outer region.  At the
support boundary the bulk prefactor $G$ vanishes, so the extremal
probabilities cannot be obtained by extrapolating the interior asymptotic and
require a separate boundary calculation.  Likewise, allowing the MDP scale
$a_e$ to diverge arbitrarily slowly rules out a single-atom lower bound and
forces a shift-uniform mesh-interval argument.  These uniformity and boundary
issues are what connect the central Gaussian regime to the full fixed-edge
asymptotic hierarchy.

Limit laws for other random planar graph and map parameters form a broad
literature; representative examples include the planar-graph limit laws of
Gim\'enez and Noy \cite{GimenezNoy2009} and the degree-count limit laws for
planar maps of Collet, Drmota and Klausner \cite{ColletDrmotaKlausner2019}.
Those models and parameters differ from the present uniform fixed-edge,
unlabelled polyhedral model, but they provide useful probabilistic context.

\subsection{The model}

Let
\[
  \mathcal E:=\{6\}\cup\{e\in\mathbb Z:e\ge 8\}.
\]
For $e\in\mathcal E$, let $\mathcal P_e$ be the finite set of combinatorial
types of convex $3$-polytopes with $e$ edges, and choose $\Pi_e$ uniformly from
$\mathcal P_e$.  Write $V_e$ and $F_e$ for the numbers of vertices and faces of
$\Pi_e$.  The set $\mathcal E$ is exactly the set of possible edge counts:
a pyramid over an $n$-gon has $2n$ edges, giving every even value $e\ge6$,
and stacking a new vertex beyond one triangular side facet adds three edges,
giving every odd value $e\ge9$ from the corresponding $(e-3)$-edge pyramid.
On the other hand, no convex $3$-polytope has seven edges: if $e=7$, then
minimum vertex and face degree give $V,F\le4$, contradicting Euler's identity
$V+F=e+2=9$.  Thus the fixed-edge model is defined precisely for
$e\in\mathcal E$.

Set
\[
  i=V_e-1,\qquad j=F_e-1,\qquad i+j=e,
\]
and to centre by
\[
  X_e:=V_e-\frac{e+2}{2}.
\]
The variable $X_e$ takes values in the translated unit lattice
\[
  \Lambda_e=\begin{cases}
  \mathbb Z,& e\text{ even},\\
  \mathbb Z+\tfrac12,& e\text{ odd},
  \end{cases}
\]
but for each fixed $e$ its feasible support is a finite subset of
$\Lambda_e$.  Below, ``attainable'' means belonging to this finite support.
For density-scale statements we use
\[
  A_e:=\frac{V_e-1}{e}=\frac{i}{e}.
\]

Define, for $\alpha\in[1/3,2/3]$,
\begin{align}
 I(\alpha)
  &=(3\alpha-1)\log(3\alpha-1)
    +(2-3\alpha)\log(2-3\alpha) \notag\\
  &\qquad -\alpha\log\alpha-(1-\alpha)\log(1-\alpha),
  \label{eq:rate-I}
\end{align}
where $0\log 0:=0$, and put $I(\alpha)=+\infty$ outside
$[1/3,2/3]$.  We also set
\begin{equation}
  G(\alpha):=
  \frac{\bigl((3\alpha-1)(2-3\alpha)\bigr)^{5/2}}
       {\alpha^4(1-\alpha)^4},
  \qquad
  \Phi(\alpha):=\log 4-I(\alpha).
  \label{eq:G-Phi}
\end{equation}

\subsection{Main theorem}

\begin{center}
\fbox{\begin{minipage}{0.88\linewidth}
\textbf{Central consequence.}
If $\beta_e=V_e/(e+2)$, then
\[
  \sqrt e\,(\beta_e-\tfrac12)\Rightarrow N(0,1/32),
  \qquad
  \operatorname{Var}(V_e)\sim e/32.
\]
Thus R\"udinger's fixed-edge form parameter has Gaussian fluctuations on
the $e^{-1/2}$ scale.  The theorem below gives the sharper local, deviation,
moment and endpoint structure surrounding this limit.
\end{minipage}}
\end{center}
\medskip

\begin{theorem}[Fixed-edge vertex--face asymptotics]\label{thm:main}
As $e\to\infty$ through $\mathcal E$, the following statements hold.
\begin{enumerate}[label=\textup{(\roman*)},leftmargin=2.4em]
\item \emph{Exact duality.}  For every feasible integer $v$,
\[
  \mathbb P(V_e=v)=\mathbb P(V_e=e+2-v).
\]
Consequently $\mathbb E V_e=(e+2)/2$ and every odd centred moment is exactly
zero.

\item \emph{Sharp bulk law.}  Uniformly for $\alpha=i/e$ in every compact
subset of $(1/3,2/3)$,
\begin{equation}
 \mathbb P(V_e=i+1)=
 \frac{G(\alpha)}{2\sqrt{\pi e}}\,
 e^{-eI(\alpha)}(1+o(1)).
 \label{eq:bulk-prob}
\end{equation}
The function $I$ is symmetric about $1/2$, strictly convex on the interior,
and has its unique zero at $1/2$, with $I''(1/2)=32$.

\item \emph{Local Gaussian and quartic laws.}  For every
$r_e=o(e^{3/4})$,
\begin{equation}
 \sup_{\substack{x\in\Lambda_e\\ |x|\le r_e}}
 \left|
 \frac{\mathbb P(X_e=x)}{\frac{4}{\sqrt{\pi e}}
       \exp(-16x^2/e)}-1
 \right|\longrightarrow 0.
 \label{eq:relative-gauss}
\end{equation}
For every $s_e=o(e^{5/6})$, uniformly for
$x\in\Lambda_e$ with $|x|\le s_e$,
\begin{equation}
 \mathbb P(X_e=x)=\frac{4}{\sqrt{\pi e}}
 \exp\left\{-\frac{16x^2}{e}-\frac{320x^4}{3e^3}+o(1)\right\}.
 \label{eq:quartic}
\end{equation}
Moreover, with $\varphi(z)=(2\pi)^{-1/2}e^{-z^2/2}$ and the convention
$\mathbb P(X_e=x)=0$ at non-attainable lattice points,
\begin{equation}
 \sup_{x\in\Lambda_e}
 \left|
 \sqrt{\frac e{32}}\,\mathbb P(X_e=x)
 -\varphi\!\left(\frac{x}{\sqrt{e/32}}\right)
 \right|\longrightarrow 0.
 \label{eq:global-llt}
\end{equation}

\item \emph{Precise moderate tails.}  If
$t_e/\sqrt e\to\infty$ and $t_e=o(e^{3/4})$, and $t_e$ is rounded upward to
the first point of $\Lambda_e$ when necessary, then
\begin{align}
 \mathbb P(X_e\ge t_e)
 &\sim \frac{\sqrt e}{8\sqrt\pi\,t_e}
       \exp(-16t_e^2/e), \label{eq:one-tail}\\
 \mathbb P(|X_e|\ge t_e)
 &\sim \frac{\sqrt e}{4\sqrt\pi\,t_e}
       \exp(-16t_e^2/e). \label{eq:two-tail}
\end{align}

\item \emph{Moderate deviations.}  For every sequence
$a_e\to\infty$ with $a_e=o(\sqrt e)$, including arbitrarily slow divergence,
\[
  Y_e:=\frac{X_e}{a_e\sqrt{e/32}}
\]
satisfies a large-deviation principle with speed $a_e^2$ and good rate
$J(y)=y^2/2$.

\item \emph{Large deviations, CLT and moments.}  The variables $A_e$ satisfy
a large-deviation principle with speed $e$ and good rate $I$.  Also
\[
  \frac{X_e}{\sqrt{e/32}}\Rightarrow N(0,1),
  \qquad
  \operatorname{Var}(V_e)\sim\frac e{32}.
\]
Every fixed even standardised moment converges to the corresponding Gaussian
moment $(2r-1)!!$, while every fixed odd centred moment is identically zero.
In particular, if $\beta_e:=V_e/(e+2)$, then
\[
  \sqrt e\,(\beta_e-\tfrac12)\Rightarrow N(0,1/32).
\]

\item \emph{Extremal vertex counts.}  If $e=3m$, $m\ge2$, then
\[
  m+2\le V_{3m}\le 2m,
\]
both endpoints occur, and polar duality gives exact equality of their
probabilities.  As $m\to\infty$,
\begin{equation}
 \mathbb P(V_{3m}=m+2)
 =\mathbb P(V_{3m}=2m)
 \sim \frac{6561}{32\sqrt2}\left(\frac4{27}\right)^m.
 \label{eq:endpoint-prob}
\end{equation}
At the lower endpoint every face is triangular; at the upper endpoint every
vertex has degree three.

\item \emph{Rooted--unrooted transfer.}  Let $R_e$ be the distribution of
$V$ under the corresponding uniformly rooted model, and let $P_e$ denote the
unrooted law above.  There exists $\rho\in(0,1)$ such that, uniformly on the
complete support,
\[
  R_e(V=v)=P_e(V=v)\bigl(1+O(\rho^e)\bigr),
  \qquad
  \|R_e-P_e\|_{\mathrm{TV}}=O(\rho^e).
\]
The same multiplicative estimate holds for every positive-probability event and
for every non-negative observable with positive expectation.  Consequently all local laws, precise tails,
endpoint asymptotics, fixed even standardised moment asymptotics, and all
LDP/MDP statements above transfer to the rooted model.  The signed-observable
bound in Section~\ref{sec:transfer} also transfers convergence of the odd
standardised moments to zero, but does not assert exact finite-$e$ vanishing
for the rooted law.  For a positive event $B_e$,
\[
  |\log R_e(B_e)-\log P_e(B_e)|=O(\rho^e).
\]
\end{enumerate}
\end{theorem}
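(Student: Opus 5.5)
The plan is to reduce everything to the rooted two-parameter count and a single Laplace-type analysis. Let $M(i,j)$ be the number of rooted $3$-connected planar maps with $i+1$ vertices and $j+1$ faces. By Whitney's theorem and Steinitz's theorem, combinatorial types of convex polytopes correspond to unrooted $3$-connected planar maps. Each such map has $4e$ rootings, counted up to automorphisms.

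By the asymmetry theorem of Bender and Wormald \cite{BenderWormald1985}, all but an exponentially small fraction of rooted maps with $e$ edges have trivial automorphism group, uniformly in $(i,j)$ on the support. Consequently
\[
|\mathcal P_e|\,P_e(V=i+1)=\frac{M(i,e-i)}{4e}\bigl(1+O(\rho^e)\bigr)
\]
uniformly. This gives (viii) immediately, once the multiplicative $O(\rho^e)$ statement is summed over events and over non-negative observables. For signed observables we bound $|E_R f-E_P f|\le \|R_e-P_e\|_{\mathrm{TV}}\cdot$ (a sup-norm or second-moment bound).

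Part (i) follows from polar duality, which is a bijection of $\mathcal P_e$ exchanging $V$ and $F$. The bounds $m+2\le V_{3m}\le 2m$ in (vii) follow from $2e\ge 3F$, $2e\ge 3V$ and Euler's formula.

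Next I will insert the Bender--Wormald uniform asymptotic \cite{BenderWormald1988},
\[
M(i,j)\sim \frac{C\,G(\alpha)}{e^{5/2}}\,e^{e\Phi(\alpha)},\qquad \alpha=i/e,
\]
in the form
\[
M(i,j)=\frac{G(\alpha)}{243\,e^{3}\sqrt{\cdots}}\binom{2i}{j+3}\binom{2j}{i+3}(1+o(1)).
\]
I will rewrite the binomials via Stirling's formula uniformly on compact subsets of $(1/3,2/3)$. I will also keep the exact binomial form, because that is what survives near the endpoints $\alpha\to1/3,2/3$. Summing over $i$ by Laplace's method around the unique maximiser $\alpha=1/2$ of $\Phi$, where $I''(1/2)=32$, gives
\[
\sum_i M \sim \frac{G(1/2)}{\cdots}\,4^e\sqrt{2\pi e/32}.
\]
Dividing yields the constant $G(\alpha)/(2\sqrt{\pi e})$ in (ii). The facts that $I$ is symmetric, strictly convex, and has $I''(1/2)=32$ are direct calculus on \eqref{eq:rate-I}.

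From (ii), the local statements in (iii) come from Taylor expansion:
\[
eI\bigl(\tfrac12+\tfrac{x'}e\bigr)=\frac{16x'^2}{e}+\frac{c_4x'^4}{e^3}+O\!\left(\frac{x'^6}{e^5}\right),\qquad x'=x+O(1).
\]
Here $i-e/2=x$ exactly, since $i=V_e-1$ and $X_e=V_e-(e+2)/2$. The quartic term is $o(1)$ precisely when $|x|=o(e^{3/4})$, and the sextic term is $o(1)$ when $|x|=o(e^{5/6})$. Together with $G$ being continuous at $1/2$ with $G(1/2)/2=4$, this gives \eqref{eq:relative-gauss} and \eqref{eq:quartic}. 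I will compute $c_4=320/3$ from $I^{(4)}(1/2)$.

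For the remaining parts:
\begin{itemize}
\item The global local limit theorem \eqref{eq:global-llt} uses the relative law for $|x|\le e^{0.6}$. Outside that range a crude exponential bound $P\le e^{-c x^2/e}$ from convexity of $I$ makes both sides $o(1)$.
\item The tails (iv) follow by summing the Gaussian-with-quartic local law over the lattice, a Mills-ratio computation. The sum beyond $e^{0.7}$ is negligible by the same convexity bound.
\item The MDP (v) and the speed-$e$ LDP (vi) follow from the local asymptotics. We get upper bounds by summing at most $e$ atoms times the maximal atom. We get lower bounds from a single atom nearest $y a_e\sqrt{e/32}$; this is valid because $a_e\to\infty$ makes $\log$ of polynomial prefactors $o(a_e^2)$ whenever $a_e^2\gg\log e$. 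For very slow $a_e$ I will instead sum a mesh interval of width $\asymp\sqrt e/a_e$ to beat the $e^{-1/2}$ prefactor.
\item For the LDP near the boundary, the exact binomial form gives $\frac1e\log P\to -I$ uniformly on all of $[1/3,2/3]$.
\item The CLT and the moments follow from the global local limit theorem plus uniform integrability, which comes from the Gaussian-type tail bound.
\end{itemize}

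The main obstacle I expect is the endpoint (vii), where $G$ vanishes and the bulk formula degenerates. There I will use exact enumeration at the extremes. With all faces triangular, the rooted count is Tutte's formula for rooted $3$-connected triangulations, of order $(256/27)^m m^{-5/2}$. I will divide this by the total $\sim c\,4^{3m}e^{-3}$ computed above; the ratio $(256/27)/64=4/27$ matches the claim. Tracking constants with Tutte's exact formula should give $6561/(32\sqrt2)$, with the automorphism correction again handled by \cite{BenderWormald1985}.
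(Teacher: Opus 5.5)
You follow the paper's route for most of the theorem: Bender--Wormald's rooted asymptotic, asymmetry-based unrooting with $O(\rho^e)$ control, Stirling and Laplace on the slice $i+j=e$, Taylor expansion of $I$, and exact-binomial bounds near the boundary. Your endpoint argument is genuinely different. You use Tutte's exact census of rooted simple triangulations, whereas the paper evaluates the Bender--Wormald formula at the boundary point, where one binomial collapses to $1$. Your route is valid and does not rely on that formula at the degenerate point. Its rooted count is $\sim\frac{3}{32}\sqrt{2/(3\pi)}\,m^{-5/2}(256/27)^m$, the same as $\frac{1}{3^5(m+1)(2m-1)}\binom{4m-2}{m+4}$.

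Your normalisation needs fixing. A rooted point count is of order $e^{-3}e^{e\Phi(\alpha)}$, and the rooted total is $4eU_e\sim 2\cdot 4^e/(243\sqrt\pi\,e^{5/2})$. You wrote these the other way round. Dividing the endpoint count by a total of order $4^{3m}e^{-3}$ would leave a spurious factor $m^{1/2}$. Your second display also multiplies the binomials by $G(\alpha)$, which counts that factor twice.

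The genuine gaps are in two uniformity steps.

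\textbf{MDP upper bound.} You bound by ``at most $e$ atoms times the maximal atom''. This gives $\mathbb P(Y_e\in F)\le e^{1/2}\exp\{-a_e^2\inf_F J+o(a_e^2)\}$, so at speed $a_e^2$ you pick up an extra $\log e/(2a_e^2)$. That term diverges when $a_e^2=O(\log e)$, which is exactly the arbitrarily slow regime the statement includes. Exponential tightness fails the same way if you bound tails by counting atoms. You noticed this problem for the lower bound only. The cure is the mesh-cell comparison:
\begin{itemize}
\item A bounded $Y$-set contains $O(a_e\sqrt e)$ atoms, each of mass $e^{o(a_e^2)}\frac{4}{\sqrt{\pi e}}e^{-a_e^2y^2/2}$. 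The $\sqrt e$ cancels and only a harmless factor $O(a_e)=e^{o(a_e^2)}$ remains.
\item For tightness, sum the domination bound $\mathbb P(X_e=x)\le Ce^{-1/2}e^{-cx^2/e}$ over $Ma_e\sqrt{e/32}<|x|\le\delta e$ as a Gaussian tail. Use the $e^{-\kappa e}$ bound beyond $\delta e$.
\end{itemize}

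\textbf{Tails in (iv).} A fixed truncation at $e^{0.7}$ lies below the admissible range $t_e=o(e^{3/4})$. For $t_e\ge e^{0.7}$, the remainder you call negligible is the entire tail. Truncate instead at $e^{\gamma}$ with $\gamma\in(3/4,5/6)$ and use the quartic local law on $[t_e,e^\gamma]$. Its quartic term is non-negative, so it only helps the upper bound. It is also $o(1)$ on the dominant range $x-t_e=O(e/t_e)$, so it does not hurt the lower bound. Beyond $e^\gamma$ the bound $\mathrm{poly}(e)\,e^{-ce^{2\gamma-1}}$ is negligible, because $16t_e^2/e=o(e^{1/2})$. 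Alternatively, use a $t_e$-adapted cutoff as the paper does with $r_e=\sqrt{eR_e}$.
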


The scales in Theorem~\ref{thm:main} are summarised in
Table~\ref{tab:scales}.

\begin{table}[ht]
\centering
\caption{Fixed-edge asymptotic regimes for the centred vertex count $X_e$.}
\label{tab:scales}
\begin{tabular}{@{}lll@{}}
\toprule
Regime & Scale / window & Asymptotic description \\
\midrule
Central & $x=O(\sqrt e)$ & Gaussian, variance $e/32$ \\
Relative local & $|x|=o(e^{3/4})$ & Gaussian point probabilities \\
Quartic local & $|x|=o(e^{5/6})$ & Gaussian plus $-320x^4/(3e^3)$ \\
Moderate tail & $\sqrt e\ll t=o(e^{3/4})$ & Mills-type exact prefactor \\
MDP & $X_e\asymp a_e\sqrt e$ & speed $a_e^2$, rate $y^2/2$ \\
LDP & $X_e\asymp e$ & speed $e$, rate $I$ \\
Endpoint & $e=3m$ & constant $\times(4/27)^m$ \\
\bottomrule
\end{tabular}
\end{table}

\section{Model, duality and classical enumeration inputs}\label{sec:model}

By Steinitz's theorem, the graphs of convex $3$-polytopes are precisely the
$3$-connected planar graphs; see, for example, \cite[Ch.~4]{Ziegler1995}.
Euler's formula gives $i+j=e$ in our shifted notation.  Since every vertex of a
$3$-polytope has degree at least three and the same is true in the dual,
\begin{equation}
  3V\le2e,\qquad 3F\le2e.
  \label{eq:support-ineq}
\end{equation}
Consequently every feasible density $\alpha=i/e$ lies in
$[1/3+O(e^{-1}),2/3+O(e^{-1})]$.

\subsection{Exact duality}

\begin{proposition}\label{prop:duality}
For each $e\in\mathcal E$, polar duality is a bijection on $\mathcal P_e$ that
maps $V$ to $e+2-V$.  Hence the law of $X_e$ is exactly symmetric.
\end{proposition}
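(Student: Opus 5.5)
The plan is to work entirely at the level of combinatorial types, using Steinitz's theorem to identify $\mathcal P_e$ with the set of isomorphism classes of $3$-connected planar graphs with $e$ edges. By Whitney's theorem, each such graph has an essentially unique embedding in the sphere. So a combinatorial type is the same thing as a face lattice, equivalently an unrooted $3$-connected planar map up to homeomorphism, possibly orientation-reversing.

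First, define the map. For $P\in\mathcal P_e$, realise it as a convex polytope with the origin in its interior. Its polar $P^\circ=\{y:\langle x,y\rangle\le1\ \forall x\in P\}$ is again a convex $3$-polytope. The face lattice of $P^\circ$ is the order-reversal of that of $P$. Consequently the combinatorial type of $P^\circ$ depends only on the type of $P$, not on the chosen realisation or the choice of interior point, since it is determined by the face lattice.

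Next, read off the counts. Vertices of $P^\circ$ correspond to facets of $P$, and facets of $P^\circ$ correspond to vertices of $P$. Edges correspond to edges, since in dimension $3$ the rank-one and rank-two levels of the lattice are exchanged. Hence $P^\circ$ has $e$ edges, so $P^\circ\in\mathcal P_e$, and $V(P^\circ)=F(P)=e+2-V(P)$ by Euler's formula. Bijectivity follows from $(P^\circ)^\circ=P$ for a realisation containing the origin in its interior, or equivalently because reversing a lattice twice returns the original lattice. The induced map on types is therefore an involution, hence a bijection.

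The symmetry statement then follows. Since $\Pi_e$ is uniform on $\mathcal P_e$ and the involution $D$ preserves the uniform measure, we get
\[
\mathbb P(V_e=v)=\#\{P:V(P)=v\}/|\mathcal P_e|=\#\{P:V(P)=e+2-v\}/|\mathcal P_e|.
\]
Thus $X_e=V_e-(e+2)/2$ and $-X_e$ have the same law. The consequences in Theorem~\ref{thm:main}(i), namely $\mathbb EV_e=(e+2)/2$ and the vanishing of odd centred moments, are immediate because the support is finite.

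The only delicate step is well-definedness on isomorphism classes. Two realisations of the same type must have combinatorially equivalent polars. I would handle this by citing the standard fact that the face lattice of the polar is the opposite lattice (Ziegler, Ch.~2). A purely graph-theoretic alternative also works: the planar dual of the unique embedding of a $3$-connected planar graph is again $3$-connected, and Whitney uniqueness makes the construction canonical.
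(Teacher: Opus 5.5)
Your proposal is correct and follows the paper's own argument. You realise a type with the origin in its interior, take the polar, swap vertices and faces while preserving edges, and use that polarity is an involution on types, so the uniform measure and hence the law of $X_e$ are symmetric; your well-definedness remark via the reversed face lattice makes explicit what the paper leaves implicit. One small wording fix: edges correspond to edges because order reversal fixes the middle (edge) level of the proper face lattice of a $3$-polytope, not because two levels are exchanged.
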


\begin{proof}
Choose a geometric realisation of a combinatorial type with the origin in its
interior and take the polar.  Vertices and faces are interchanged, edges are
preserved, and polarity is an involution on combinatorial types.  Uniform
measure is therefore invariant under this bijection.  Since $F=e+2-V$, the
asserted distributional identity follows.  Pairing dual types (or using
invariance directly) gives the exact mean; symmetry of $X_e$ gives all odd
centred moments.
\end{proof}

\subsection{The two classical inputs}

Let $p_{i,j}$ denote the rooted count with $i+1$ vertices and $j+1$ faces.
Bender and Wormald \cite{BenderWormald1988} proved, uniformly as
$\max(i,j)\to\infty$,
\begin{equation}
 p_{i,j}\sim
 \frac{1}{3^5ij}
 \binom{2i}{j+3}\binom{2j}{i+3}.
 \label{eq:bw-rooted}
\end{equation}
The exact indexing and the two $+3$ shifts will matter below.  The stated
uniformity has no compact-ratio restriction, so it applies to the moving
fixed-edge sequences used here, including sequences approaching the feasible
boundary.

The second input is the uniform asymmetry theorem of Bender and Wormald
\cite{BenderWormald1985}.  Let $q_{v,f}$ be the rooted count with $v$ vertices
and $f$ faces, and let $s_{v,f}$ be the number of symmetric unrooted maps.  There
exists $c\in(0,1)$ such that
\begin{equation}
  s_{v,f}=o(c^v q_{v,f})
  \quad\text{uniformly for}\quad
  \frac12<\frac fv<2.
  \label{eq:bw-asymmetry}
\end{equation}
Their Corollary~4.2 gives the corresponding leading unrooting factor
$4(v+f-2)$.  The fixed-edge support lies strictly inside the ratio range in
\eqref{eq:bw-asymmetry}: from \eqref{eq:support-ineq} and
$v+f=e+2$ one has $v,f\ge e/3+2$ and $v,f\le2e/3$.  In particular,
\[
  2f-v=2e+4-3v\ge4>0,
  \qquad
  2v-f=3v-e-2\ge4>0,
\]
so explicitly
\[
  \frac12<\frac fv<2
\]
at every feasible fixed-edge state.

Immediately after their rooted theorem, Bender and Wormald
\cite{BenderWormald1988} also print the associated unrooted asymptotic and
explicitly note that it follows from the earlier asymmetry/unrooting result.
We therefore derive the fixed-edge unrooted asymptotic from these two inputs
in the next section.  Earlier enumerative work includes
\cite{MullinSchellenberg1968,Tutte1980,RichmondWormald1982,BenderRichmond1984}.

\section{Uniform rooted--unrooted comparison on a fixed-edge slice}\label{sec:transfer}

Let $u_{v,f}$ be the number of unrooted combinatorial types with $v$ vertices
and $f$ faces, and put $e=v+f-2$.

\begin{lemma}[Exact rooting identity]\label{lem:rooting}
If $M$ is an unrooted $e$-edge map, the number of distinct rootings of $M$ is
\[
 \frac{4e}{|\operatorname{Aut}(M)|}.
\]
Consequently
\begin{equation}
 0\le u_{v,f}-\frac{q_{v,f}}{4e}\le s_{v,f}.
 \label{eq:rooting-sandwich}
\end{equation}
\end{lemma}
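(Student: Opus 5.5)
The plan is to apply orbit--stabiliser to the action of $\operatorname{Aut}(M)$ on the set of potential roots, and then sum over isomorphism classes. A root of a map is a choice of an edge, a direction along it, and a side (equivalently, a face incident to it). For an $e$-edge map there are exactly $4e$ such root choices, since each edge gives $2\times2$ choices. The automorphism group $\operatorname{Aut}(M)$ acts on this set of $4e$ darts-with-sides, allowing orientation-reversing automorphisms since combinatorial types of polytopes are taken up to reflection. Two root choices give the same rooted map exactly when some automorphism carries one to the other. So the distinct rootings of $M$ are the orbits of this action.

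The key step is that the action is free. A map automorphism fixing a root fixes the root edge, its direction and its side. Since the underlying graph is $3$-connected, hence connected with a fixed rotation system, propagating around faces and vertices shows that such an automorphism is the identity. This is the standard rigidity fact for rooted maps and the main point to state carefully; I expect no real obstacle beyond citing it. Freeness means every orbit has size $|\operatorname{Aut}(M)|$, so the number of orbits is $4e/|\operatorname{Aut}(M)|$.

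For the sandwich, sum over unrooted maps $M$ with $v$ vertices and $f$ faces:
\[
q_{v,f}=\sum_M\frac{4e}{|\operatorname{Aut}(M)|}.
\]
Asymmetric maps, those with $|\operatorname{Aut}(M)|=1$, contribute $4e$ each, and symmetric maps contribute at most $4e/2$ each but at least $0$. Hence
\[
4e\,(u_{v,f}-s_{v,f})\le q_{v,f}\le 4e\,u_{v,f}.
\]
Dividing by $4e$ and rearranging gives $0\le u_{v,f}-q_{v,f}/(4e)\le s_{v,f}$, which is \eqref{eq:rooting-sandwich}.

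The only care needed is that the convention defining $s_{v,f}$ in \eqref{eq:bw-asymmetry} matches this one: symmetric means a nontrivial automorphism, with reflections counted. This convention is consistent with the unrooting factor $4(v+f-2)=4e$ from Corollary~4.2 of \cite{BenderWormald1985}.
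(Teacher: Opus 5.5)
Your proposal is correct and follows the paper's proof: orbit--stabiliser for the action of $\operatorname{Aut}(M)$ on the $4e$ edge--direction--side flags, whose stabilisers are trivial, and then a sum over types in which asymmetric maps contribute $1$ and symmetric maps contribute $1/|\operatorname{Aut}(M)|\le 1$ to $q_{v,f}/(4e)$. Your flag-propagation argument for freeness, and your remark that reflections must be counted in both $\operatorname{Aut}(M)$ and $s_{v,f}$ to match the factor $4e$, spell out points the paper's proof leaves implicit.
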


\begin{proof}
There are $4e$ oriented edge-side flags before quotienting by automorphisms,
and the stabiliser of a rooted flag is trivial.  Orbit--stabiliser therefore
gives $4e/|\operatorname{Aut}(M)|$ distinct rootings.  An asymmetric type
contributes exactly one to $q_{v,f}/(4e)$, while a symmetric type contributes
$1/|\operatorname{Aut}(M)|\le1$.  Summing the deficit over symmetric types
gives \eqref{eq:rooting-sandwich}.
\end{proof}

\begin{theorem}[Exponentially accurate fixed-edge transfer]\label{thm:transfer}
There exists $\rho\in(0,1)$ such that, uniformly over the complete feasible
fixed-edge support,
\begin{equation}
 q_{v,f}=4e\,u_{v,f}\bigl(1+O(\rho^e)\bigr).
 \label{eq:q-u}
\end{equation}
The rooted and unrooted count systems have the same support.  If $R_e$ and
$P_e$ denote their normalised vertex laws, then uniformly on the common
support
\begin{equation}
  \frac{R_e(V=v)}{P_e(V=v)}=1+O(\rho^e),
  \qquad
  \|R_e-P_e\|_{\mathrm{TV}}=O(\rho^e).
  \label{eq:law-ratio}
\end{equation}
The same multiplicative estimate holds for every positive-probability event
and every non-negative observable with positive expectation.
\end{theorem}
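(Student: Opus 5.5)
The plan is to combine the rooting sandwich \eqref{eq:rooting-sandwich} of Lemma~\ref{lem:rooting} with the asymmetry bound \eqref{eq:bw-asymmetry}, and then normalise. First I reduce everything to a single inequality. Lemma~\ref{lem:rooting} gives
\[
 q_{v,f}\le 4e\,u_{v,f}\le q_{v,f}+4e\,s_{v,f}.
\]
So \eqref{eq:q-u} follows once I show $4e\,s_{v,f}=O(\rho^e)\,q_{v,f}$ uniformly over the feasible slice. Every feasible $(v,f)$ satisfies $1/2<f/v<2$, as verified explicitly in Section~\ref{sec:model}, so \eqref{eq:bw-asymmetry} applies and gives $s_{v,f}\le \varepsilon_v c^v q_{v,f}$ with $\varepsilon_v\to0$ uniformly. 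Since $v\ge e/3+2$, we get $c^v\le c^{e/3}$. Hence $4e\,s_{v,f}/q_{v,f}\le 4e\,c^{e/3}$, which is $O(\rho^e)$ for any fixed $\rho\in(c^{1/3},1)$.

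Two small points need care. The first is that the $o(\cdot)$ in \eqref{eq:bw-asymmetry} is asymptotic, so finitely many small $e$ must be absorbed into the implicit constant. This is harmless: for each fixed $e$ the slice is finite, and on it $q_{v,f}>0$ wherever $u_{v,f}>0$. The second is the support claim. Any unrooted type has at least one rooting, and any rooted map forgets to an unrooted type, so $q_{v,f}>0$ if and only if $u_{v,f}>0$. In particular the $O(\rho^e)$ factor is well defined on the whole support, including the extremal states $v=e/3+2$ and $v=2e/3$. This is the step I regard as the main point of care: the uniformity must cover the boundary of the support, where the bulk asymptotics degenerate. It does, because \eqref{eq:bw-asymmetry} needs only the strict ratio condition, and that condition holds with margin ($2f-v\ge4$, $2v-f\ge4$) everywhere on the slice.

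For the laws, write $q_{v,f}=4e\,u_{v,f}(1+\delta_v)$ with $\sup_v|\delta_v|\le C\rho^e$. Then
\[
R_e(V=v)=\frac{u_{v,f}(1+\delta_v)}{\sum_{v'}u_{v',f'}(1+\delta_{v'})}.
\]
The denominator equals $\bigl(\sum u\bigr)(1+\bar\delta)$, where $\bar\delta$ is a convex average of the $\delta_{v'}$, so $|\bar\delta|\le C\rho^e$. Hence
\[
R_e(V=v)/P_e(V=v)=(1+\delta_v)/(1+\bar\delta)=1+O(\rho^e)
\]
uniformly. The total variation bound follows by summing $|R_e-P_e|\le O(\rho^e)P_e$ over $v$.

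For a positive-probability event $B$, or a non-negative observable $h$ with $\mathbb E_P h>0$, the same argument applies. Write $\mathbb E_R h=\sum_v h(v)R_e(v)$ and insert the pointwise ratio. Because the weights are non-negative, the errors add at most a factor $1\pm O(\rho^e)$, which gives $\mathbb E_R h=\mathbb E_P h\,(1+O(\rho^e))$. Taking logarithms then yields the $O(\rho^e)$ bound on the log-discrepancy stated in Theorem~\ref{thm:main}(viii).
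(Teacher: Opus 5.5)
Your proposal is correct and follows essentially the same route as the paper. Both arguments combine the rooting sandwich with the uniform asymmetry bound, use $v\ge e/3+2$ to choose $\rho\in(c^{1/3},1)$ so that the factor $e$ is absorbed, and then renormalise via $R_e(v)=P_e(v)(1+\delta_v)/(1+\bar\delta)$ with $\bar\delta$ the $P_e$-weighted average of the errors; your explicit handling of finitely many small $e$ and of the strict ratio margin at the support boundary only spells out details the paper treats briefly.
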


\begin{proof}
The exact rooting identity shows first that $q_{v,f}=0$ if and only if
$u_{v,f}=0$.  At a positive-count point define
\[
 d_{v,f}:=u_{v,f}-\frac{q_{v,f}}{4e}\ge0.
\]
By \eqref{eq:rooting-sandwich} and \eqref{eq:bw-asymmetry}, uniformly on the
fixed-edge support,
\[
  0\le \frac{4e\,d_{v,f}}{q_{v,f}}
  \le \frac{4e\,s_{v,f}}{q_{v,f}}
  =o(e c^v).
\]
Since $v\ge e/3+2$, choose any $\rho\in(c^{1/3},1)$.  The polynomial factor
$e$ is then absorbed into $O(\rho^e)$, proving \eqref{eq:q-u} uniformly.

Write $q_v=4e\,u_v(1+\varepsilon_{e,v})$ with
$\eta_e:=\sup_v|\varepsilon_{e,v}|=O(\rho^e)$.  If
$\bar\varepsilon_e=\sum_vP_e(v)\varepsilon_{e,v}$, then
$|\bar\varepsilon_e|\le\eta_e$ and
\[
  R_e(v)=P_e(v)\frac{1+\varepsilon_{e,v}}
                         {1+\bar\varepsilon_e}.
\]
This gives the pointwise ratio and, after summation, the total-variation
bound.  Averaging the same ratio over an event or against a non-negative
observable gives the final assertion.
\end{proof}

For any positive-probability event $B_e$, Theorem~\ref{thm:transfer} also gives
\begin{equation}
  R_e(B_e)=P_e(B_e)(1+O(\rho^e)),
  \qquad
  |\log R_e(B_e)-\log P_e(B_e)|=O(\rho^e).
  \label{eq:event-transfer}
\end{equation}
Thus every LDP or MDP at a diverging speed transfers unchanged between the two
models, as do the local laws, precise positive tails, endpoint probabilities,
and fixed non-negative moments.  For signed observables the corresponding
absolute error is $O(\rho^e)$ times the unrooted absolute moment.  In
particular, because the unrooted odd centred moments vanish exactly by duality,
the rooted odd standardised moments converge to zero (indeed with an
$O(\rho^e)$ relative-to-absolute-moment error bound), although exact
finite-$e$ vanishing is not claimed for the rooted model.

Combining \eqref{eq:bw-rooted} with \eqref{eq:q-u}, and recalling
$q_{i+1,j+1}=p_{i,j}$ and $e=i+j$, define the shifted unrooted count
\[
  \widetilde u_{i,j}:=u_{i+1,j+1}.
\]
Then the fixed-edge unrooted asymptotic is
\begin{equation}
 \widetilde u_{i,j}\sim
 \frac{1}{4\,3^5ij(i+j)}
 \binom{2i}{j+3}\binom{2j}{i+3},
 \label{eq:bw-unrooted}
\end{equation}
uniformly over the complete feasible fixed-edge support as $e\to\infty$.
This is the same unrooted formula printed in \cite{BenderWormald1988}; the
point of the derivation here is to make its reliance on the earlier asymmetry
result explicit.  The exponential transfer error is much smaller than the
source theorem's unquantified uniform $o(1)$ and hence does not alter the
leading asymptotic.

\begin{lemma}[Interior support]\label{lem:interior-support}
Let $K\subset(1/3,2/3)$ be compact.  For all sufficiently large admissible
$e$, every integer pair $i+j=e$ with $i/e\in K$ is feasible; equivalently,
$\widetilde u_{i,j}>0$.  In particular, if $r_e=o(e)$, then every
$x\in\Lambda_e$ with $|x|\le r_e$ is attainable for all sufficiently large
$e$.
\end{lemma}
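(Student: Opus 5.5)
The plan is to read positivity directly off the uniform asymptotic \eqref{eq:bw-unrooted}. Since $\widetilde u_{i,j}\sim C_{i,j}$ uniformly over the feasible fixed-edge support, it is convenient to have a statement that also covers pairs not yet known to be feasible. So I would first restate Bender--Wormald's rooted theorem \eqref{eq:bw-rooted} in the form $p_{i,j}=\frac{1}{3^5ij}\binom{2i}{j+3}\binom{2j}{i+3}(1+o(1))$, uniformly as $\max(i,j)\to\infty$ over all pairs. As stated, this uniformity has no ratio restriction, and read literally it covers every pair $(i,j)$.

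For $i/e\in K$ with $K\subset[\alpha_0,1-\alpha_0]$ and $\alpha_0>1/3$, I would then check that the right-hand side is strictly positive. We have $j+3\le 2i$ as soon as $e(1-\alpha)+3\le 2\alpha e$, which holds because $3\alpha-1\ge 3\alpha_0-1>0$ once $e$ is large. Symmetrically, $i+3\le 2j$ uses $2-3\alpha>0$. So both binomials are positive, and since the asymptotic relation holds with a $1+o(1)$ factor, $p_{i,j}>0$ for all large $e$. Then $q_{i+1,j+1}=p_{i,j}>0$, and the exact rooting identity (Lemma~\ref{lem:rooting}) gives $u_{i+1,j+1}\ge q/(4e)>0$, that is, $\widetilde u_{i,j}>0$.

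\textbf{Main obstacle: avoiding circularity.} This step is the one I expect to be delicate. If one insists that \eqref{eq:bw-rooted} is only asserted on the feasible set, the argument would assume what it is meant to prove. As a backup I would give a direct construction, which I would present in any case for robustness. Take $v=i+1$ and $f=j+1$ with $2v-f\ge4$, $2f-v\ge4$, and $e=v+f-2$; the interior condition makes both of these margins of order $e$.

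The construction starts from a pyramid over an $n$-gon, which has $(v,f)=(n+1,n+1)$. Stacking a degree-3 vertex onto a triangular face adds $(1,2)$ to $(v,f)$, and stacking preserves 3-connectivity and yields triangular faces again. The dual operation, truncating a degree-3 vertex, adds $(2,1)$. Starting from the $n$-pyramid and applying $a$ stackings and $b$ truncations reaches $(n+1+a+2b,\ n+1+2a+b)$. To hit a target $(v,f)$ we need $a-b=f-v$ and $n+1+a+2b=v$. Take $b=0$ and $a=f-v$ when $f\ge v$, and symmetrically otherwise; then $n=v-1-a=2v-f-1\ge3$. Each step needs an available triangular face (resp.\ degree-3 vertex): a freshly stacked vertex creates new triangles, and the pyramid's side faces are triangles, so this is always possible.

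This construction shows feasibility of every $(v,f)$ with $2v-f\ge4$ and $2f-v\ge4$, apart from the small case check (it also recovers the exceptional $e=7$). In particular it covers every pair with $i/e\in K$ for large $e$. The final clause then follows immediately. If $|x|\le r_e=o(e)$, then $\alpha=i/e=1/2+O(r_e/e)+O(1/e)$ lies in $K=[0.4,0.6]$ for large $e$, so $x$ is attainable.
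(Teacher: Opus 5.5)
Your proof is correct, and your primary route is the paper's own argument. For $i/e\in K$ and large $e$ you get $j+3\le 2i$ and $i+3\le 2j$, so the right-hand side of \eqref{eq:bw-rooted} is positive. The uniform ratio then forces $p_{i,j}>0$, and the rooting sandwich gives $\widetilde u_{i,j}\ge q_{i+1,j+1}/(4e)>0$.

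On the circularity worry: the paper adopts exactly the reading you flag. It stresses in Section~2 that \eqref{eq:bw-rooted} is uniform as $\max(i,j)\to\infty$ with no ratio restriction, so on its own terms the argument is not circular. You are right that \eqref{eq:bw-unrooted}, which is asserted only on the feasible support, could not serve here.

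Your backup construction is a genuinely different, purely combinatorial argument, and it is sound. The pyramid over an $n$-gon followed by $a$ stackings and $b$ truncations reaches $(n+1+a+2b,\,n+1+2a+b)$. Your choice $n=2v-f-1$ (resp.\ $2f-v-1$) is at least $3$ exactly when $2v-f\ge4$ (resp.\ $2f-v\ge4$). Since $2v-f\ge4\iff j+3\le 2i$ and $2f-v\ge4\iff i+3\le 2j$, this recovers Steinitz's classical description of the vertex/face pairs of $3$-polytopes. It shows that the feasible set coincides with the positivity set of the Bender--Wormald right-hand side, boundary included, for every $e$ and not just asymptotically.

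What this buys over the paper's route is independence from the precise scope of the asymptotic input. The paper's route buys brevity, given that the input is already in hand.

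Two small tidy-ups:
\begin{enumerate}[label=\textup{(\arabic*)},leftmargin=2.4em]
\item In the truncation branch, say explicitly that the base vertices of the pyramid have degree three and that each truncation creates three new degree-three vertices. This is the dual of your remark about triangular faces.
\item Drop the ``small case check'': $n\ge3$ is automatic. The exclusion of $e=7$ comes from the necessary inequalities \eqref{eq:support-ineq}, not from the construction.
\end{enumerate}
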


\begin{proof}
Because $K$ is compactly contained in $(1/3,2/3)$, for all sufficiently large
$e$ every such pair satisfies
\[
  j+3\le 2i,\qquad i+3\le 2j,
\]
and $i,j>0$.  Hence both binomial coefficients on the right-hand side of
\eqref{eq:bw-rooted} are positive.  The Bender--Wormald asymptotic is uniform
as $\max(i,j)\to\infty$, so uniformly over these pairs the ratio of
$p_{i,j}$ to that positive right-hand side tends to one.  Thus $p_{i,j}>0$
for all sufficiently large $e$.  The exact support equivalence in
Theorem~\ref{thm:transfer} gives $\widetilde u_{i,j}>0$.

For the final assertion, $i=e/2+x$ gives $i/e=1/2+x/e$.  If $r_e=o(e)$,
these ratios lie in a fixed compact subinterval of $(1/3,2/3)$ for all
sufficiently large $e$, so the first assertion applies.
\end{proof}

\section{Fixed-edge enumeration and normalisation}\label{sec:normalisation}

We now extract a one-dimensional Laplace problem from the fixed-edge
unrooted asymptotic \eqref{eq:bw-unrooted}, whose classical provenance was
made explicit in Section~\ref{sec:transfer}.  Put
\begin{equation}
 h(\alpha):=\frac{G(\alpha)}{4\,3^5\pi}.
 \label{eq:hdef}
\end{equation}

\begin{lemma}[Rate-function geometry]\label{lem:rate}
On $(1/3,2/3)$,
\begin{equation}
 I''(\alpha)=
 \frac{2}{\alpha(1-\alpha)(3\alpha-1)(2-3\alpha)}.
 \label{eq:Isecond}
\end{equation}
Thus $I$ is strictly convex, satisfies $I(1-\alpha)=I(\alpha)$, and has its
unique minimum $I(1/2)=0$ at $1/2$.  Moreover
\begin{align}
 I(\tfrac12+d)
 &=16d^2+\frac{320}{3}d^4+\frac{23296}{15}d^6+O(d^8),
 \label{eq:I-taylor}\\
 G(\tfrac12)&=8,
 \qquad h(\tfrac12)=\frac{2}{3^5\pi}.
 \label{eq:Gcentre}
\end{align}
At the two endpoints,
\[
 I(\tfrac13)=I(\tfrac23)=\log3-\frac23\log2.
\]
\end{lemma}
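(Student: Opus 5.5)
The lemma is a direct calculus verification, and I will carry it out in the order stated. First I differentiate \eqref{eq:rate-I} termwise on $(1/3,2/3)$. Using $(u\log u)'=\log u+1$ and the chain rule, the constant terms cancel because $3-3-1+1=0$. This leaves
\[
 I'(\alpha)=3\log(3\alpha-1)-3\log(2-3\alpha)-\log\alpha+\log(1-\alpha).
\]
Differentiating once more gives
\[
 I''(\alpha)=\frac{9}{3\alpha-1}+\frac{9}{2-3\alpha}-\frac1\alpha-\frac1{1-\alpha}
 =\frac{9}{(3\alpha-1)(2-3\alpha)}-\frac{1}{\alpha(1-\alpha)}.
\]
Putting these over a common denominator, the numerator is $9\alpha(1-\alpha)-(3\alpha-1)(2-3\alpha)=9\alpha-9\alpha^2-(-9\alpha^2+9\alpha-2)=2$, which is \eqref{eq:Isecond}. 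Every factor in the denominator is positive on the open interval, so $I''>0$ there and $I$ is strictly convex.

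For the symmetry, the substitution $\alpha\mapsto1-\alpha$ sends $3\alpha-1\mapsto2-3\alpha$ and $\alpha\mapsto1-\alpha$. It therefore permutes the terms of \eqref{eq:rate-I}, so $I(1-\alpha)=I(\alpha)$. Evaluating at the centre, $I(1/2)=2\cdot\tfrac12\log\tfrac12-2\cdot\tfrac12\log\tfrac12=0$, and also $I'(1/2)=0$. Strict convexity then makes $1/2$ the unique minimiser, with minimum value $0$. At the endpoint $\alpha=1/3$ the term $0\log0$ vanishes, and
\[
I(\tfrac13)=\log1-\tfrac13\log\tfrac13-\tfrac23\log\tfrac23=\log3-\tfrac23\log2 .
\]
The value at $2/3$ follows by symmetry. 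For the centre values, put $\alpha=1/2$ in \eqref{eq:G-Phi}: $(3\alpha-1)(2-3\alpha)=1/4$, so the numerator is $(1/4)^{5/2}=1/32$, and $\alpha^4(1-\alpha)^4=1/256$. Hence $G(1/2)=8$, and \eqref{eq:hdef} gives $h(1/2)=8/(4\cdot3^5\pi)=2/(3^5\pi)$.

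The only step that needs real work is the Taylor expansion \eqref{eq:I-taylor}. Rather than differentiate $I$ six times, I will expand $I''(1/2+d)$ and integrate twice, using $I(1/2)=I'(1/2)=0$. Writing $\alpha=1/2+d$ gives $\alpha(1-\alpha)=\tfrac14(1-4d^2)$ and $(3\alpha-1)(2-3\alpha)=\tfrac14(1-36d^2)$. Therefore
\[
I''(\tfrac12+d)=\frac{32}{(1-4d^2)(1-36d^2)}.
\]
Expanding as geometric series gives $32\bigl(1+40d^2+(16+144+1296)d^4+O(d^6)\bigr)=32+1280d^2+46592d^4+O(d^6)$. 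Integrating twice, the coefficients become $32/2=16$ for $d^2$, then $1280/12=320/3$ for $d^4$, then $46592/30=23296/15$ for $d^6$. The leading term also gives $I''(1/2)=32$, as used in Theorem~\ref{thm:main}(ii). By the symmetry already established, $I$ is even in $d$, so the remainder is $O(d^8)$. I expect the main risk to be arithmetic slips in the fourth-order coefficient of this series, so I will double-check it, including the value $1456=16+144+1296$, against a direct expansion of the logarithms.
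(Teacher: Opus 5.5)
Your proposal is correct and follows the same direct-verification route as the paper: differentiate twice, read off the symmetry and $I(\tfrac12)=I'(\tfrac12)=0$, substitute for the endpoint and centre values, then Taylor expand. All of your arithmetic checks out, including $1456$ and $23296/15$. Expanding $I''(\tfrac12+d)=32/\bigl((1-4d^2)(1-36d^2)\bigr)$ and integrating twice is a tidy way to carry out the Taylor step that the paper leaves unspecified. The only point you leave tacit is that $\log3-\tfrac23\log2>0$, so the boundary points cannot compete with the interior minimiser.
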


\begin{proof}
The symmetry is immediate from \eqref{eq:rate-I}.  Differentiating twice and
simplifying gives \eqref{eq:Isecond}, which is positive throughout the open
support.  Symmetry gives $I'(1/2)=0$ and direct substitution gives
$I(1/2)=0$, proving uniqueness of the minimum.  Taylor expansion at $1/2$
gives \eqref{eq:I-taylor}; \eqref{eq:Gcentre} and the endpoint values follow
by substitution.
\end{proof}

Let
\[
  U_e:=\sum_{i+j=e}\widetilde u_{i,j}
\]
be the number of unrooted combinatorial types with $e$ edges; the equality
uses $i+j=e$ under the shifted convention above.

\begin{theorem}[Diagonal counts]\label{thm:counts}
Uniformly for $\alpha=i/e$ in every compact subset of $(1/3,2/3)$,
\begin{equation}
 \widetilde u_{i,e-i}=e^{-4}h(\alpha)e^{e\Phi(\alpha)}(1+o(1)).
 \label{eq:diag-count}
\end{equation}
The source theorem contributes only its stated uniform $o(1)$; no quantitative
rate for that error is assumed.  Furthermore
\begin{equation}
 U_e\sim \frac{4^e}{486\sqrt\pi\,e^{7/2}}.
 \label{eq:Ue}
\end{equation}
For every fixed $\delta\in(0,1/6)$ there are positive constants
$C,c_\delta,\kappa_\delta$ such that, writing $i=e/2+x$ on the attainable
lattice,
\begin{align}
 \widetilde u_{i,e-i}
 &\le C4^e e^{-4}\exp(-c_\delta x^2/e),
 && |x|\le\delta e, \label{eq:count-central-bound}\\
 \widetilde u_{i,e-i}
 &\le C e^6 4^e\exp(-\kappa_\delta e),
 && |x|>\delta e. \label{eq:count-far-bound}
\end{align}
The bounds include both parities and both support endpoints.
\end{theorem}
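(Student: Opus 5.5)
The plan is to apply Stirling's formula to the two binomial coefficients in \eqref{eq:bw-unrooted}, run a Laplace sum for $U_e$, and use crude entropy bounds for the tail estimates.

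\textbf{Step 1: the diagonal count \eqref{eq:diag-count}.} Write $i=\alpha e$, $j=(1-\alpha)e$, with $\alpha$ in a compact $K\subset(1/3,2/3)$. Then $2i-(j+3)=(3\alpha-1)e-3$ and $2j-(i+3)=(2-3\alpha)e-3$. Stirling's formula with uniform relative error gives
\[
\binom{2i}{j+3}\sim \frac{(2i)^{2i+1/2}}{\sqrt{2\pi}\,(j+3)^{j+7/2}\,(2i-j-3)^{2i-j-5/2}}.
\]
The shifts by $3$ are absorbed as follows: $(j+3)^{j+3}=j^{j+3}e^{3+o(1)}$ and $(m-3)^{m-3}=m^{m-3}e^{-3+o(1)}$, and these $e^{\pm3}$ factors cancel. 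Collecting exponents, the exponential part is
\[
e^{e[2\alpha\log 2\alpha-(1-\alpha)\log(1-\alpha)-(3\alpha-1)\log(3\alpha-1)]}
\]
times its mirror image. Adding the two exponents gives $2\log2\cdot e$ plus the entropy terms, which should reduce exactly to $e(\log4-I(\alpha))=e\Phi(\alpha)$.

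For the polynomial prefactors, the powers of $e$ are $e^{1/2-7/2+5/2}=e^{-1/2}$ from each binomial. Combined with $1/(4\cdot3^5 ij e)$, this gives $e^{-4}$. The constants in $\alpha$ assemble to
\[
\frac{\bigl((3\alpha-1)(2-3\alpha)\bigr)^{5/2}}{\alpha^4(1-\alpha)^4}\cdot\frac{1}{4\cdot 3^5\pi},
\]
up to powers of $2$ from the $(2i)^{2i+1/2}$ terms, which I will track carefully to confirm $h=G/(4\cdot3^5\pi)$. Uniformity on $K$ comes from the uniform $o(1)$ in \eqref{eq:bw-unrooted} together with the uniform Stirling error.

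\textbf{Step 2: the total count \eqref{eq:Ue}.} Split the sum over $x=i-e/2$ into $|x|\le e^{0.6}$, $e^{0.6}<|x|\le\delta e$, and $|x|>\delta e$.

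On the first range, \eqref{eq:diag-count} and the expansion \eqref{eq:I-taylor} give $\widetilde u\sim e^{-4}h(1/2)4^e e^{-16x^2/e}$, uniformly, since $e\,(x/e)^4\to0$ when $|x|\le e^{0.6}$. Summing over a unit lattice gives $\sqrt{\pi e/16}$, so
\[
U_e\sim \frac{2}{3^5\pi}\,4^e e^{-4}\,\frac{\sqrt{\pi e}}{4}=\frac{4^e}{486\sqrt{\pi}\,e^{7/2}},
\]
as claimed. The remaining ranges are negligible by the bounds proved in Step 3.

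\textbf{Step 3: the bounds \eqref{eq:count-central-bound} and \eqref{eq:count-far-bound}.}

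For $|x|\le\delta e$, the density $\alpha$ lies in the compact set $[1/2-\delta,1/2+\delta]\subset(1/3,2/3)$. Step 1 applies there, $h$ is bounded, and strict convexity gives $I(\alpha)\ge c_\delta(\alpha-1/2)^2$. This yields \eqref{eq:count-central-bound}. The parity of $e$ is irrelevant here, since only $i$ integer matters.

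The main obstacle is \eqref{eq:count-far-bound}, because it must hold up to the support boundary, where $G\to0$ and Stirling's formula is not uniform. There I will avoid asymptotics and use a non-asymptotic bound instead. The asymptotic \eqref{eq:bw-unrooted} holds uniformly on the whole support, so for large $e$ we have $\widetilde u_{i,j}\le 2\binom{2i}{j+3}\binom{2j}{i+3}$. I will then apply the entropy bound $\binom{n}{k}\le e^{nH(k/n)}$. Absorbing the $+3$ shifts costs at most polynomial factors, at worst $e^{6}$; this is the source of the $e^6$ in the statement. The result is $\widetilde u\le Ce^6e^{e\Phi(\alpha)}$ with $\Phi$ extended continuously to $[1/3,2/3]$ via $0\log0=0$.

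Since $I$ is continuous on the closed interval, strictly convex, and zero only at $1/2$, we have $\min_{|\alpha-1/2|\ge\delta}I(\alpha)=\kappa_\delta>0$. This also covers the endpoint values computed in Lemma~\ref{lem:rate}, and gives \eqref{eq:count-far-bound}.

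The far contribution to $U_e$ is then $O(e^7 4^e e^{-\kappa_\delta e})$, which is negligible. The middle range contributes $O\bigl(4^e e^{-4}\sqrt e\,e^{-c_\delta e^{0.2}}\bigr)$, also negligible. This completes \eqref{eq:Ue}.
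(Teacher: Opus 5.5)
Your proposal is correct and follows essentially the same route as the paper. The paper likewise uses Stirling on the compact interior and the convexity bound $I(\alpha)\ge c_\delta(\alpha-1/2)^2$ for the central range. Near the endpoints it uses the crude shift bound together with $\binom nk\le e^{nH(k/n)}$, which reproduces $\Phi$ and the $e^6$ factor. It then normalises with a Gaussian lattice sum on an $o(e^{3/4})$ window, the two bounds disposing of the tails.

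The only cosmetic difference is in Step 1. You apply Stirling directly to the shifted binomials and cancel the $e^{\pm3}$ factors, whereas the paper applies Stirling to $\binom{2i}{j}\binom{2j}{i}$ and multiplies by the exact shift ratios. Your bookkeeping does close: the powers of $2$ combine to $\sqrt{\alpha(1-\alpha)}/\pi$, giving $h=G/(4\cdot3^5\pi)$.
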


\begin{proof}
Fix a compact $K\subset(1/3,2/3)$, put $j=e-i$, and let
$\alpha=i/e$.  Uniform Stirling expansion gives
\begin{equation}
 \binom{2i}{j}\binom{2j}{i}
 =\frac{e^{e\Phi(\alpha)}}
 {\pi e\sqrt{(3\alpha-1)(2-3\alpha)}}
 \bigl(1+O_K(e^{-1})\bigr).
 \label{eq:binom-base}
\end{equation}
The fixed shifts can be isolated exactly:
\begin{align}
 \frac{\binom{2i}{j+3}}{\binom{2i}{j}}
 &=\prod_{r=0}^{2}\frac{2i-j-r}{j+r+1}, \label{eq:shift1}\\
 \frac{\binom{2j}{i+3}}{\binom{2j}{i}}
 &=\prod_{r=0}^{2}\frac{2j-i-r}{i+r+1}. \label{eq:shift2}
\end{align}
Uniformly on $K$, these ratios equal
\[
 \left(\frac{3\alpha-1}{1-\alpha}\right)^3(1+O_K(e^{-1}))
 \quad\text{and}\quad
 \left(\frac{2-3\alpha}{\alpha}\right)^3(1+O_K(e^{-1})),
\]
respectively.  Since $ij(i+j)=\alpha(1-\alpha)e^3$, inserting
\eqref{eq:binom-base}--\eqref{eq:shift2} into the unrooted asymptotic
\eqref{eq:bw-unrooted} yields \eqref{eq:diag-count}.  The only unquantified source error is the uniform $o(1)$ inherited from
\eqref{eq:bw-rooted}; the rooted--unrooted correction is exponentially smaller.

We next control the full diagonal.  Strict convexity and
\eqref{eq:Isecond} imply that on
$[1/2-\delta,1/2+\delta]$,
\[
  I(\alpha)\ge c_\delta(\alpha-1/2)^2
\]
for some $c_\delta>0$.  The compact asymptotic therefore gives
\eqref{eq:count-central-bound} after increasing the constant to absorb finitely
many small values of $e$.

For the remaining support we use a bound that is valid up to the endpoints.
Whenever the shifted binomial coefficients are non-zero,
\[
 \binom{2i}{j+3}\le(2e)^3\binom{2i}{j},
 \qquad
 \binom{2j}{i+3}\le(2e)^3\binom{2j}{i}.
\]
With $H(t)=-t\log t-(1-t)\log(1-t)$ and
$\binom nk\le e^{nH(k/n)}$, direct simplification gives
\[
 2\alpha H\!\left(\frac{1-\alpha}{2\alpha}\right)
 +2(1-\alpha)H\!\left(\frac{\alpha}{2(1-\alpha)}\right)
 =\Phi(\alpha).
\]
Thus, using the uniformity of \eqref{eq:bw-unrooted},
\[
  \widetilde u_{i,e-i}\le C e^6 e^{e\Phi(i/e)}.
\]
Since $I$ is continuous on the closed support and strictly positive away from
$1/2$,
\[
 \kappa_\delta=
 \min_{\substack{\alpha\in[1/3,2/3]\\|\alpha-1/2|\ge\delta}}I(\alpha)>0,
\]
which proves \eqref{eq:count-far-bound}.  This argument never divides by the
vanishing interior prefactor at an endpoint.

Finally choose $\eta\in(0,1/4)$ and put $w_e=e^{1/2+\eta}$.  Since
$w_e=o(e)$, Lemma~\ref{lem:interior-support} shows that every
$x\in\Lambda_e$ with $|x|\le w_e$ is attainable for all sufficiently large
$e$.  For these $x$, Lemma~\ref{lem:rate} gives
\[
 eI(\tfrac12+x/e)=\frac{16x^2}{e}+O(x^4/e^3)
 =\frac{16x^2}{e}+o(1)
\]
uniformly.  Hence
\begin{equation}
 \widetilde u_{e/2+x,e/2-x}
 =4^e e^{-4}h(\tfrac12)e^{-16x^2/e}(1+o(1)).
 \label{eq:central-count}
\end{equation}
For either translate of the unit lattice,
\[
 \frac1{\sqrt e}\sum_{x\in\Lambda_e}e^{-16x^2/e}
 \longrightarrow \int_{-\infty}^{\infty}e^{-16t^2}\,dt
 =\frac{\sqrt\pi}{4}.
\]
The tails outside $w_e$ are negligible by
\eqref{eq:count-central-bound}--\eqref{eq:count-far-bound}.  Using
$h(1/2)=2/(3^5\pi)$ in \eqref{eq:central-count} gives
\[
 U_e\sim
 4^e e^{-4}\frac{2}{3^5\pi}\frac{\sqrt{\pi e}}{4}
 =\frac{4^e}{486\sqrt\pi\,e^{7/2}}.
\]
\end{proof}

The total asymptotic \eqref{eq:Ue} is consistent with the classical fixed-edge
enumeration of Richmond and Wormald \cite{RichmondWormald1982}.  Its role here
is as the normalising constant for the distributional results that follow.

\section{Bulk and local asymptotics}\label{sec:local}

\begin{theorem}[Sharp bulk probability]\label{thm:bulk}
Uniformly for $\alpha=i/e$ in every compact subset of $(1/3,2/3)$,
\[
 \mathbb P(V_e=i+1)=
 \frac{G(\alpha)}{2\sqrt{\pi e}}e^{-eI(\alpha)}(1+o(1)).
\]
\end{theorem}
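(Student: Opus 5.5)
The plan is to divide the diagonal count asymptotic \eqref{eq:diag-count} of Theorem~\ref{thm:counts} by the total count \eqref{eq:Ue}, since $\Pi_e$ is uniform on $\mathcal P_e$. For $V_e=i+1$ we have $F_e=j+1$ with $j=e-i$, so
\[
 \mathbb P(V_e=i+1)=\frac{\widetilde u_{i,e-i}}{U_e}.
\]
Fix a compact $K\subset(1/3,2/3)$. By Lemma~\ref{lem:interior-support}, every such pair is feasible for large $e$, so the ratio is a genuine positive probability. Theorem~\ref{thm:counts} gives, uniformly for $\alpha=i/e\in K$,
\[
 \widetilde u_{i,e-i}=e^{-4}h(\alpha)e^{e\Phi(\alpha)}(1+o(1)).
\]
The denominator $U_e$ does not depend on $i$, so its $(1+o(1))$ factor is trivially uniform in $\alpha$.

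The key step is to simplify the quotient. Since $\Phi(\alpha)=\log4-I(\alpha)$, we have $e^{e\Phi(\alpha)}=4^e e^{-eI(\alpha)}$, and the $4^e$ cancels against the denominator. This leaves
\[
 \frac{e^{-4}h(\alpha)e^{-eI(\alpha)}}{(486\sqrt\pi)^{-1}e^{-7/2}}
 =486\sqrt\pi\,e^{-1/2}\,h(\alpha)\,e^{-eI(\alpha)}.
\]
Substituting $h(\alpha)=G(\alpha)/(4\cdot 3^5\pi)=G(\alpha)/(972\pi)$ gives the prefactor
\[
 \frac{486\sqrt\pi}{972\pi}\,\frac{G(\alpha)}{\sqrt e}=\frac{G(\alpha)}{2\sqrt{\pi e}},
\]
which is exactly the claimed constant.

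I expect no serious obstacle. The only points to check are that the two $(1+o(1))$ factors combine uniformly on $K$, and that $G$ is bounded away from zero on $K$, so that the relative error statement is meaningful. Both are immediate: the first because one factor is uniform on $K$ and the other is independent of $i$; the second because $G$ is continuous and positive on $(1/3,2/3)$.

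The accompanying claims in Theorem~\ref{thm:main}(ii) about $I$ (symmetry, strict convexity, unique zero at $1/2$, and $I''(1/2)=32$) are already contained in Lemma~\ref{lem:rate}. In particular, \eqref{eq:Isecond} at $\alpha=1/2$ gives
\[
 I''(\tfrac12)=\frac{2}{(1/4)(1/2)(1/2)}=32,
\]
so nothing further is needed there.
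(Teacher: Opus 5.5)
Your proposal is correct and matches the paper's proof: the paper likewise divides \eqref{eq:diag-count} by \eqref{eq:Ue}. It cancels $4^e$ via $\Phi=\log 4-I$, uses $486\sqrt\pi\,h(\alpha)=G(\alpha)/(2\sqrt\pi)$ to get the prefactor, and inherits uniformity on compact subsets from Theorem~\ref{thm:counts}.
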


\begin{proof}
Divide \eqref{eq:diag-count} by \eqref{eq:Ue}.  Since
$h(\alpha)=G(\alpha)/(4\,3^5\pi)$ and $486=2\cdot3^5$,
\[
 486\sqrt\pi\,h(\alpha)=\frac{G(\alpha)}{2\sqrt\pi},
\]
which gives the stated prefactor.  All uniformity is inherited from
Theorem~\ref{thm:counts}.
\end{proof}

The bulk formula applies only in the interior.  At the endpoints $G$ vanishes
and a different asymptotic scale applies; Section~\ref{sec:endpoints} treats
those points directly.

\begin{theorem}[Gaussian local law and quartic correction]\label{thm:local}
The local statements \eqref{eq:relative-gauss}--\eqref{eq:global-llt} hold.
\end{theorem}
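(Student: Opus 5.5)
The plan is to work from the ratio of \eqref{eq:diag-count} to \eqref{eq:Ue}, as in Theorem~\ref{thm:bulk}, but to expand around $\alpha=1/2$ on windows where $d=x/e\to0$ rather than on a fixed compact set. Fix $x\in\Lambda_e$ and set $i=e/2+x-1+\cdots$; more precisely $V_e=i+1$ with $X_e=x$ gives $i=e/2+x$. Thus $\alpha=i/e=1/2+x/e$, and any window $|x|\le s_e=o(e)$ keeps $\alpha$ inside a fixed compact subset of $(1/3,2/3)$ for large $e$. By Lemma~\ref{lem:interior-support}, all lattice points there are attainable, and Theorem~\ref{thm:bulk} applies uniformly:
\[
\mathbb P(X_e=x)=\frac{G(1/2+x/e)}{2\sqrt{\pi e}}\,e^{-eI(1/2+x/e)}(1+o(1)).
\]
Since $G$ is smooth near $1/2$ with $G(1/2)=8$ by \eqref{eq:Gcentre}, we have $G(1/2+x/e)=8(1+O(|x|/e))=8(1+o(1))$. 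The prefactor is therefore $\tfrac{4}{\sqrt{\pi e}}(1+o(1))$ uniformly on any $o(e)$ window.

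The exponent is handled with \eqref{eq:I-taylor}:
\[
eI(1/2+x/e)=\frac{16x^2}{e}+\frac{320x^4}{3e^3}+O(x^6/e^5),
\]
uniformly for $|x|/e$ small. For $|x|\le r_e=o(e^{3/4})$ both $x^4/e^3$ and $x^6/e^5$ are $o(1)$, which gives \eqref{eq:relative-gauss}. For $|x|\le s_e=o(e^{5/6})$ we have $x^6/e^5=o(1)$, so the quartic term must be kept, and this yields \eqref{eq:quartic}. The only point needing care is that the Taylor remainder bound is uniform. I would get this from analyticity of $I$ near $1/2$, which controls the $O(d^8)$ term in \eqref{eq:I-taylor} on a fixed neighbourhood. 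The multiplicative $(1+o(1))$ is then folded into the $o(1)$ in the exponent.

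For the global bound \eqref{eq:global-llt}, the supremum over all of $\Lambda_e$ is split into three regions.
\begin{itemize}
\item On $|x|\le e^{2/3}$, relative convergence from \eqref{eq:relative-gauss} holds. Note that $\tfrac{4}{\sqrt{\pi e}}e^{-16x^2/e}=\sqrt{32/e}\,\varphi(x/\sqrt{e/32})$ exactly. Multiplying by $\sqrt{e/32}$, the difference is $\varphi(\cdot)\,o(1)$, which is $o(1)$ uniformly because $\varphi$ is bounded.
\item On $e^{2/3}<|x|\le\delta e$, I would use \eqref{eq:count-central-bound} divided by \eqref{eq:Ue}. This gives $\sqrt e\,\mathbb P(X_e=x)\le C'e\exp(-c_\delta e^{1/3})\to0$, and the Gaussian term is likewise negligible there.
\item On $|x|>\delta e$, \eqref{eq:count-far-bound} gives $\sqrt e\,\mathbb P(X_e=x)\le C e^{10}\exp(-\kappa_\delta e)\to0$. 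Non-attainable points contribute probability $0$ against a Gaussian value that is tiny in this region.
\end{itemize}

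The main obstacle is the middle region. There one must make sure the bulk bound \eqref{eq:count-central-bound} really covers all attainable $x$, including both parities and points near $\pm\delta e$, without relying on relative asymptotics. Since Theorem~\ref{thm:counts} states these bounds explicitly for both parities and endpoints, the overlap of the three regions closes the argument.
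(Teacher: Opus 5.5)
Your proposal is correct and follows essentially the same route as the paper. You apply Theorem~\ref{thm:bulk} on $o(e)$ windows via Lemma~\ref{lem:interior-support}, expand $eI(\tfrac12+x/e)$ using \eqref{eq:I-taylor} to obtain the $o(e^{3/4})$ and $o(e^{5/6})$ windows, and prove \eqref{eq:global-llt} by splitting into three regions, controlled by \eqref{eq:count-central-bound} and \eqref{eq:count-far-bound} divided by \eqref{eq:Ue}; your cutoff $e^{2/3}$ is just the paper's $e^{1/2+\eta}$ with $\eta=1/6$, and the extra polynomial factor in your middle-region bound is harmless.
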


\begin{proof}
Write $i=e/2+x$ and $d=x/e$.  From Lemma~\ref{lem:rate}, uniformly whenever
$d\to0$,
\begin{equation}
 eI(\tfrac12+d)
 =\frac{16x^2}{e}+\frac{320x^4}{3e^3}
  +\frac{23296x^6}{15e^5}+O(x^8/e^7).
 \label{eq:local-rate-expand}
\end{equation}
Also, by symmetry and smoothness of $G$ near $1/2$,
\[
 G(\tfrac12+d)=8\bigl(1+O(d^2)\bigr).
\]
Both local windows are $o(e)$, so Lemma~\ref{lem:interior-support} shows that
all lattice points under consideration are attainable for all sufficiently
large $e$.  If $|x|\le r_e=o(e^{3/4})$, the quartic and higher terms in
\eqref{eq:local-rate-expand} are $o(1)$, and Theorem~\ref{thm:bulk} yields
\eqref{eq:relative-gauss}.  If $|x|\le s_e=o(e^{5/6})$, the sixth-order and
higher terms are $o(1)$ while the quartic term must be retained, yielding
\eqref{eq:quartic}.

For the global local limit choose a fixed $\eta\in(0,1/4)$.  On
$|x|\le e^{1/2+\eta}$, \eqref{eq:relative-gauss} is uniform and
\[
 \sqrt{\frac e{32}}\frac4{\sqrt{\pi e}}e^{-16x^2/e}
 =\frac1{\sqrt{2\pi}}
   \exp\left\{-\frac12\frac{x^2}{e/32}\right\}.
\]
Outside this window but within a fixed central density interval,
\eqref{eq:count-central-bound} divided by \eqref{eq:Ue} tends uniformly to
zero after multiplication by $\sqrt e$; outside the central density interval,
\eqref{eq:count-far-bound} does the same exponentially.  The Gaussian target
also tends uniformly to zero in these outer regions.  This proves
\eqref{eq:global-llt}.
\end{proof}

The exponents $3/4$ and $5/6$ have a simple origin.  The first neglected term
beyond the quadratic is $x^4/e^3$, so a purely Gaussian relative approximation
requires $x=o(e^{3/4})$; once the quartic term is retained, the next term is
$x^6/e^5$, leading to the larger window $o(e^{5/6})$.

\section{Tails, moderate deviations and global limits}\label{sec:global}

\subsection{Precise moderate tails}

\begin{theorem}[Lattice Mills asymptotics]\label{thm:tails}
If $t_e/\sqrt e\to\infty$ and $t_e=o(e^{3/4})$, then
\eqref{eq:one-tail} and \eqref{eq:two-tail} hold, with upward rounding to
$\Lambda_e$ when necessary.
\end{theorem}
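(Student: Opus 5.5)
The plan is to split the one-sided tail into a near part and a far part, using the relative Gaussian local law \eqref{eq:relative-gauss} on the near part and the count bounds of Theorem~\ref{thm:counts} on the far part. Write $\tau_e$ for $t_e$ rounded up to $\Lambda_e$; since $t_e/\sqrt e\to\infty$, rounding by at most $1$ changes $16t_e^2/e$ by $O(t_e/e)=o(1)$. So it suffices to prove the claims with $\tau_e$ in place of $t_e$. Choose an auxiliary window
\[
  w_e:=\tau_e+\frac{e}{\tau_e}\,\lambda_e,
\]
with $\lambda_e\to\infty$ slowly enough that $w_e=o(e^{3/4})$. This is possible because $e/\tau_e=o(\sqrt e)=o(e^{3/4})$. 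Then split
\[
  \mathbb P(X_e\ge\tau_e)=\sum_{x\in\Lambda_e,\ \tau_e\le x\le w_e}\mathbb P(X_e=x)+\mathbb P(X_e>w_e).
\]

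\emph{Near part.} By \eqref{eq:relative-gauss} with $r_e=w_e$ (and Lemma~\ref{lem:interior-support} for attainability), the first sum equals $(1+o(1))\frac{4}{\sqrt{\pi e}}\sum_{\tau_e\le x\le w_e}e^{-16x^2/e}$. Substitute $x=\tau_e+k$ with $k=0,1,2,\dots$. Then
\[
  e^{-16x^2/e}=e^{-16\tau_e^2/e}\,e^{-32\tau_e k/e}\,e^{-16k^2/e}.
\]
The geometric ratio is $q=e^{-32\tau_e/e}$, and $32\tau_e/e\to0$ because $\tau_e=o(e^{3/4})$. The relevant range is $k\lesssim e/\tau_e=o(\sqrt e)$, on which $16k^2/e=o(1)$. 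This is the step that needs the most care: the factor $e^{-16k^2/e}$ must be shown negligible uniformly over the effective range, and the cutoff at $w_e$ must lose only a fraction $e^{-32\lambda_e}\to 0$ of the geometric mass. Granting that, the sum is $(1+o(1))\,e^{-16\tau_e^2/e}\,\frac{e}{32\tau_e}$. Multiplying by $\frac{4}{\sqrt{\pi e}}$ gives $\frac{\sqrt e}{8\sqrt\pi\,\tau_e}e^{-16\tau_e^2/e}$, which is \eqref{eq:one-tail}.

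\emph{Far part.} Here I must show $\mathbb P(X_e>w_e)$ is $o$ of the main term. This is the main obstacle, because $w_e$ may be far below $e^{3/4}$ while the crude bound \eqref{eq:count-central-bound} has an unspecified constant $c_\delta<16$. I would handle it in two ranges.

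On $w_e<x\le e^{3/4-\epsilon'}$ (with any fixed $\epsilon'$, intersected with $x\le r'_e=o(e^{3/4})$), I would still use \eqref{eq:relative-gauss}. Monotonicity of the Gaussian then bounds this range by the geometric tail from $w_e$, which is $O\bigl(e^{-32\lambda_e}\bigr)$ times the main term.

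Beyond $e^{3/4-\epsilon'}$, up to $|x|\le\delta e$, I would use Theorem~\ref{thm:bulk} directly, since $\alpha$ stays in a compact set. Convexity of $I$ with $I''\ge 32$ (from \eqref{eq:Isecond}, whose minimum on $(1/3,2/3)$ is at $1/2$) gives $eI(\tfrac12+x/e)\ge 16x^2/e$. This yields point probabilities $O(e^{-1/2}e^{-16x^2/e})$, which are summable against the same geometric bound. Beyond $\delta e$, \eqref{eq:count-far-bound} divided by \eqref{eq:Ue} is $e^{-\kappa_\delta e}\mathrm{poly}(e)$. This is negligible against $e^{-16\tau_e^2/e}\ge e^{-o(\sqrt e)}$.

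Finally, \eqref{eq:two-tail} follows from Proposition~\ref{prop:duality}: $X_e$ is exactly symmetric and $\tau_e>0$, so the two one-sided tails are disjoint and equal, which doubles the constant to $\frac{\sqrt e}{4\sqrt\pi\,t_e}$.
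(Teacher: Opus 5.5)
Your proof is correct, but it handles the far tail differently from the paper, at exactly the point you flagged as the main obstacle.

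\textbf{The paper's route.} The paper keeps the crude bound \eqref{eq:count-central-bound} with its unspecified constant $c_\delta$ and defeats it by pushing the cutoff far out. It sets $q_e=t_e^2/e$, $m_e=\max\{q_e,\log e\}$, $R_e=\sqrt{m_e e^{1/2}}$ and $r_e=\sqrt{eR_e}$. Then $r_e=o(e^{3/4})$, yet $r_e^2/e=R_e\gg q_e+\log e$. So $\mathbb P(X_e>r_e)\le Ce^De^{-cR_e}$ is negligible for any $c>0$. The relative local law is then used on the whole window $[\widehat t_e,r_e]$, and the Gaussian lattice sum is compared with the integral.

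\textbf{Your route.} You sharpen the majorant instead. Both factors in the denominator of \eqref{eq:Isecond} are maximised at $1/2$, so $I''\ge 32$ on the whole open support. Hence $eI(\tfrac12+x/e)\ge 16x^2/e$, and Theorem~\ref{thm:bulk} gives $\mathbb P(X_e=x)\le Ce^{-1/2}e^{-16x^2/e}$ for all $|x|\le\delta e$. In effect you may take $c_\delta=16$ in \eqref{eq:count-central-bound}. With the exact constant, the far region is dominated by the same geometric series as the main term. You therefore need the local law only on the short window $[\tau_e,\tau_e+\lambda_e e/\tau_e]$, with no bookkeeping tying the cutoff to $\log e$.

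\textbf{Comparison.} The paper's argument is more robust: any Gaussian-type majorant with a positive constant suffices, so it would survive a less explicit rate function. Yours is more transparent, but it relies on the specific fact that the curvature of $I$ is minimal at the centre.

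Two small remarks on your write-up. First, the separate treatment of the intermediate range $w_e<x\le e^{3/4-\epsilon'}$ is unnecessary: your bound from Theorem~\ref{thm:bulk} already covers all $w_e<x\le\delta e$. Second, the step you left as granted is settled by also taking $\lambda_e=o(\tau_e/\sqrt e)$. Then $\lambda_e e/\tau_e=o(\sqrt e)$, so $e^{-16k^2/e}=1+o(1)$ uniformly on the whole near window, and $w_e=o(e^{3/4})$ holds automatically.
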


\begin{proof}
Let $\widehat t_e$ be the first point of $\Lambda_e$ not below $t_e$.  Since
$0\le\widehat t_e-t_e<1$,
\[
 \frac{\widehat t_e^2-t_e^2}{e}=o(1),\qquad
 \frac{\widehat t_e}{t_e}\to1,
\]
so threshold rounding is asymptotically harmless.

We make the truncation in the local approximation explicit.  Put
\[
 q_e:=\frac{t_e^2}{e},\qquad
 m_e:=\max\{q_e,\log e\},\qquad
 R_e:=\sqrt{m_e e^{1/2}},\qquad
 r_e:=\sqrt{eR_e}.
\]
Because $t_e=o(e^{3/4})$, one has $m_e=o(e^{1/2})$ and therefore
\[
 q_e+\log e=o(R_e),\qquad t_e=o(r_e),\qquad r_e=o(e^{3/4}).
\]
Since $r_e=o(e)$, Lemma~\ref{lem:interior-support} shows that every
point of $[\widehat t_e,r_e]\cap\Lambda_e$ is attainable for all sufficiently
large $e$.  The relative local theorem is consequently uniform on this entire
lattice interval.

The point bounds from Theorem~\ref{thm:counts}, divided by
\eqref{eq:Ue}, imply for some fixed $C,D,c>0$ that
\[
 \mathbb P(X_e>r_e)\le C e^D e^{-cR_e}.
\]
Since $R_e\gg q_e+\log e$, this is
\[
 o\!\left(\frac{\sqrt e}{t_e}e^{-16q_e}\right),
\]
which is the claimed Mills scale.  The Gaussian lattice tail beyond $r_e$ is
negligible on the same scale by monotone sum--integral comparison.  Hence
\[
 \mathbb P(X_e\ge t_e)
 =\frac4{\sqrt{\pi e}}(1+o(1))
 \sum_{\substack{x\in\Lambda_e\\x\ge\widehat t_e}}
 e^{-16x^2/e}.
\]
Because $t_e/\sqrt e\to\infty$ and the lattice mesh is one,
\[
 \sum_{x\ge\widehat t_e}e^{-16x^2/e}
 \sim \int_{t_e}^{\infty}e^{-16x^2/e}\,dx
 \sim \frac{e}{32t_e}e^{-16t_e^2/e}.
\]
This proves \eqref{eq:one-tail}.  Exact symmetry of $X_e$ gives equal upper
and lower tails, which are disjoint for positive thresholds, and therefore
proves \eqref{eq:two-tail}.
\end{proof}

\subsection{A full moderate-deviation principle}

\begin{theorem}[Moderate deviations]\label{thm:mdp}
For every $a_e\to\infty$ with $a_e=o(\sqrt e)$, the variables
\[
 Y_e=\frac{X_e}{a_e\sqrt{e/32}}
\]
satisfy an LDP with speed $a_e^2$ and good rate $J(y)=y^2/2$.
\end{theorem}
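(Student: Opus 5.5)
The plan is to verify the two defining LDP inequalities on intervals, lift them to general sets using one-dimensionality, and obtain exponential tightness from the tail bounds already proved. The rate $J(y)=y^2/2$ is continuous with compact level sets, so it is good. It then suffices to prove three things. First, a local upper estimate: for every closed interval $[y_1,y_2]$ not containing $0$, $\limsup a_e^{-2}\log\mathbb P(Y_e\in[y_1,y_2])\le-\min(y_1^2,y_2^2)/2$. Second, a local lower estimate: for every $y$ and every $\varepsilon>0$, $\liminf a_e^{-2}\log\mathbb P(|Y_e-y|<\varepsilon)\ge -y^2/2$. Third, exponential tightness at speed $a_e^2$. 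A closed set $F$ is handled by splitting it into $F\cap[-M,M]$, which is covered by finitely many small intervals on which $J$ is nearly constant, and $F\setminus[-M,M]$, which is controlled by tightness. Open sets are handled by the local lower estimate at their points.

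For the upper estimate, I would translate $Y_e\ge y$ (with $y>0$) into $X_e\ge t_e:=y a_e\sqrt{e/32}$. Since $a_e\to\infty$ and $a_e=o(\sqrt e)$, we have $t_e/\sqrt e\to\infty$, but $t_e$ need not be $o(e^{3/4})$, so Theorem~\ref{thm:tails} cannot be applied directly. I would instead use the point bounds of Theorem~\ref{thm:counts} divided by \eqref{eq:Ue}. On $|x|\le\delta e$ these give $\mathbb P(X_e=x)\le Ce^{1/2}\exp(-eI(1/2+x/e))$ up to the $1+o(1)$ factor. The bound with the exact $I$ is available because the compact-bulk asymptotic \eqref{eq:diag-count} is uniform on $[1/2-\delta,1/2+\delta]$. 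Since $I(1/2+d)\ge16d^2$ (the Taylor coefficients are positive, and by convexity this holds at least on a small window), we get $\mathbb P(X_e=x)\le Ce^{1/2}e^{-16x^2/e}$. Summing over $x\ge t_e$ gives at most $Ce^{3/2}\exp(-y^2a_e^2/2)$. The points with $|x|>\delta e$ contribute $e^{O(1)}e^{-\kappa e}$, which is negligible at speed $a_e^2=o(e)$.

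The main obstacle is the polynomial prefactor $e^{3/2}$: $a_e^{-2}\log e$ need not vanish when $a_e$ diverges slowly. I would remove it as follows. When $a_e^2\gg\log e$, the prefactor is harmless. Otherwise $t_e=O(\sqrt{e\log e})=o(e^{3/4})$, and the relative local law \eqref{eq:relative-gauss} applies on the relevant window, giving the exact Gaussian lattice sum with prefactor $O(1)$ (Mills' ratio). A cleaner uniform route is to apply the relative Gaussian law on $|x|\le r_e$ for some $r_e=o(e^{3/4})$ with $r_e\gg a_e\sqrt e$, and to cover the region beyond $r_e$ by the crude bound, whose exponent $16r_e^2/e$ dominates $a_e^2$ by a growing margin that absorbs $\log e$. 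The same split applied to the sum over $[y_1,y_2]$ gives the interval upper bound. Exponential tightness follows the same way: $\mathbb P(|Y_e|\ge M)\le\exp(-(M^2/2-o(1))a_e^2)$.

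For the lower estimate, since $a_e=o(\sqrt e)$ we may pick $r_e=o(e^{3/4})$ with $r_e\gg a_e\sqrt e$. Then the window $\{x:|x/(a_e\sqrt{e/32})-y|<\varepsilon\}$ lies inside $|x|\le r_e$ and contains about $\varepsilon a_e\sqrt e\to\infty$ lattice points of $\Lambda_e$, all attainable by Lemma~\ref{lem:interior-support}. This is the shift-uniform mesh-interval argument, and it avoids relying on a single atom. By \eqref{eq:relative-gauss}, each such point has probability at least $\tfrac12\cdot4(\pi e)^{-1/2}\exp(-(|y|+\varepsilon)^2a_e^2/2)$. Choosing a single point near $y a_e\sqrt{e/32}$ already gives a lower bound of $e^{-1/2}\exp(-(|y|+\varepsilon)^2a_e^2/2)$, but the factor $e^{-1/2}$ is not negligible when $a_e$ diverges slowly. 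Summing over the roughly $a_e\sqrt e$ points in the window replaces it by the factor $a_e$, which is harmless at speed $a_e^2$. Letting $\varepsilon\downarrow0$ completes the lower bound, and together with the upper estimate and tightness this yields the LDP.
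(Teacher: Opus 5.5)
Your upper bound and exponential tightness can be made to work through the case split you describe, after passing to subsequences along which either $a_e^2/\log e\to\infty$ or $a_e^2=O(\log e)$. The lower bound, however, has a genuine gap. You assert that, because $a_e=o(\sqrt e)$, one may choose $r_e=o(e^{3/4})$ with $r_e\gg a_e\sqrt e$. That is possible only when $a_e=o(e^{1/4})$; the hypothesis gives merely $a_e\sqrt e=o(e)$.

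For example, with $a_e=e^{2/5}$ the window $|Y_e-y|<\varepsilon$ sits at $|x|\asymp e^{9/10}$, which is outside the range of \eqref{eq:relative-gauss}. There the relative Gaussian approximation is actually false. By the bulk law and \eqref{eq:local-rate-expand}, the point probabilities carry an extra factor $\exp\{-\tfrac{320x^4}{3e^3}(1+o(1))\}$, and $x^4/e^3\asymp a_e^4/e\to\infty$. So your per-point lower bound, as justified, covers only $a_e=o(e^{1/4})$, while the theorem is asserted for every $a_e=o(\sqrt e)$. The same flaw invalidates your ``cleaner uniform route'' for the upper bound; your first, case-split route does not depend on it.

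The missing idea is that at speed $a_e^2$ you need only logarithmic accuracy $e^{o(a_e^2)}$, not relative accuracy. That is available on the whole MDP window from Theorem~\ref{thm:bulk}, which is uniform for $|x|\le\delta e$, together with $eI(\tfrac12+\tfrac xe)=\tfrac{16x^2}{e}+O(x^4/e^3)$. On a bounded $Y$-window one has $x^4/e^3=O(a_e^4/e)=o(a_e^2)$ and $G\to8$. Hence every lattice point there has probability $e^{o(a_e^2)}\frac{4}{\sqrt{\pi e}}e^{-16x^2/e}$. Summing over the $\asymp\varepsilon a_e\sqrt e$ points of the window then gives the lower bound for all $a_e=o(\sqrt e)$ at once. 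Your mesh-interval idea of summing rather than using a single atom is correct. This is precisely the paper's estimate \eqref{eq:mdp-mesh-mass}, to which it applies a shift-uniform mesh Laplace step on bounded sets plus exponential tightness.

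Note also that the point bound from Theorem~\ref{thm:counts} divided by \eqref{eq:Ue} carries $e^{-1/2}$, not $e^{1/2}$. Use that factor, and replace the crude count of $\delta e$ terms by the Gaussian sum $\sum_{x\ge t_e}e^{-16x^2/e}\le(1+\sqrt{\pi e}/8)\,e^{-16t_e^2/e}$. Then the polynomial prefactor you identify as the main obstacle disappears, and no case split on $a_e^2$ versus $\log e$ is needed.
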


\begin{proof}
The lattice of $Y_e$ is
\[
 L_e=\Delta_e(\mathbb Z+\theta_e),\qquad
 \Delta_e=\frac{\sqrt{32}}{a_e\sqrt e},
\]
for some shift $\theta_e\in[0,1)$.  Since $a_e=o(\sqrt e)$,
\begin{equation}
  a_e^2\Delta_e=\sqrt{32}\,\frac{a_e}{\sqrt e}\longrightarrow0.
  \label{eq:mdp-mesh-small}
\end{equation}
For $y$ in a fixed bounded interval, the corresponding centred displacement is
$x=a_e\sqrt{e/32}\,y+O(1)$.  Since $a_e=o(\sqrt e)$, one has $x=o(e)$
uniformly on that interval, and Lemma~\ref{lem:interior-support} shows that
all of its lattice points are attainable for all sufficiently large $e$.
Lemma~\ref{lem:rate} then gives uniformly
\begin{equation}
 eI\!\left(\frac12+\frac{x}{e}\right)
 =\frac12a_e^2y^2+o(a_e^2),
 \qquad
 G\!\left(\frac12+\frac{x}{e}\right)=8(1+o(1)).
 \label{eq:mdp-rate}
\end{equation}
The exact identity
\[
 \frac4{\sqrt{\pi e}}=\frac{a_e\Delta_e}{\sqrt{2\pi}}
\]
therefore turns the point probabilities into a Gaussian mesh measure:
uniformly for bounded $y$,
\begin{equation}
 \mathbb P(Y_e=y)
 =e^{o(a_e^2)}
 \frac{a_e}{\sqrt{2\pi}}e^{-a_e^2y^2/2}\,\Delta_e.
 \label{eq:mdp-mesh-mass}
\end{equation}
This display makes explicit why no condition such as
$\log e=o(a_e^2)$ is needed: the $e^{-1/2}$ point prefactor is paired with the
$O(a_e\sqrt e)$ lattice points in a fixed $Y$-interval.

We record the elementary shifted-mesh Laplace step.  If
$L_n=\Delta_n(\mathbb Z+\theta_n)$, $a_n\to\infty$,
$\Delta_n\downarrow0$ and $a_n^2\Delta_n\to0$, then for
$g_n(y)=a_n(2\pi)^{-1/2}e^{-a_n^2y^2/2}$,
\begin{align*}
 \limsup_{n\to\infty}\frac1{a_n^2}
 \log\sum_{y\in L_n\cap F}g_n(y)\Delta_n
 &\le-\inf_{y\in F}\frac{y^2}{2},\\
 \liminf_{n\to\infty}\frac1{a_n^2}
 \log\sum_{y\in L_n\cap O}g_n(y)\Delta_n
 &\ge-\inf_{y\in O}\frac{y^2}{2}
\end{align*}
for every bounded closed $F$ and bounded open $O$, uniformly in the shifts.
For the upper bound, attach one mesh cell to each selected point; by
$a_n^2\Delta_n\to0$ the logarithmic variation of $g_n$ across a bounded cell
is $o(a_n^2)$.  For the lower bound, choose a compact interval
$J\subset O$ whose rate infimum is arbitrarily close to that of $O$ and sum
all full mesh cells contained in $J$.  At most two endpoint cells are lost.
Thus the lower bound uses interval mass rather than a single lattice atom.
Applying this lemma to \eqref{eq:mdp-mesh-mass} gives the LDP lower bound for
bounded open sets and upper bound for bounded closed sets.

It remains to prove exponential tightness.  Fix
$\delta\in(0,1/6)$.  The central point bound
\eqref{eq:count-central-bound}, after division by \eqref{eq:Ue}, becomes
\[
 \mathbb P(Y_e=y)
 \le C a_e\Delta_e\,e^{-c_0a_e^2y^2}
\]
while $|X_e|\le\delta e$.  Summing by the same mesh-cell comparison gives
\[
 \limsup_{e\to\infty}\frac1{a_e^2}
 \log\mathbb P\!\left(M<|Y_e|\le
 \frac{\delta\sqrt{32e}}{a_e}\right)
 \le-c_1M^2.
\]
Outside the central density interval, \eqref{eq:count-far-bound} and
\eqref{eq:Ue} give a fixed exponential factor $e^{-\kappa e}$ up to a
polynomial.  Since $a_e^2=o(e)$, that contribution is superexponentially small
at speed $a_e^2$.  Exponential tightness follows, upgrading the bounded-set
bounds to the full LDP with good rate $J(y)=y^2/2$.
\end{proof}

\subsection{\texorpdfstring{Speed-$e$}{Speed-e} large deviations, the CLT, and moments}

\begin{theorem}[Global limits]\label{thm:global}
The variables $A_e=(V_e-1)/e$ satisfy an LDP with speed $e$ and good rate
$I$.  Furthermore
\[
 Z_e:=\frac{X_e}{\sqrt{e/32}}\Rightarrow N(0,1),
\]
and for every fixed integer $r\ge1$,
\[
 \mathbb E Z_e^{2r}\longrightarrow (2r-1)!!,
 \qquad
 \mathbb E X_e^{2r+1}=0.
\]
In particular $\operatorname{Var}(V_e)\sim e/32$.
\end{theorem}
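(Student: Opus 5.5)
The proof splits into three parts: the speed-$e$ LDP for $A_e$, the CLT, and the moments. All three rest on the point bounds of Theorem~\ref{thm:counts} divided by \eqref{eq:Ue}, together with the local theory of Theorem~\ref{thm:local}.

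\textbf{Speed-$e$ LDP.} For the upper bound, the upper estimate in Theorem~\ref{thm:counts} (the entropy bound $\widetilde u_{i,e-i}\le Ce^6e^{e\Phi(i/e)}$, valid up to both endpoints) gives
\[
\mathbb P(A_e=\alpha)\le C' e^{D}e^{-eI(\alpha)}
\]
uniformly on the feasible support. There are at most $e+1$ atoms, so the polynomial factors vanish at speed $e$. For a closed $F$, lower semicontinuity of $I$ then yields
\[
\limsup e^{-1}\log\mathbb P(A_e\in F)\le-\inf_F I.
\]
For the lower bound, take an open $O$ and a point $\alpha_0\in O$ with $I(\alpha_0)<\infty$. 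Since $I$ is continuous on $[1/3,2/3]$, I may move $\alpha_0$ into $O\cap(1/3,2/3)$ while changing $I$ by at most $\varepsilon$. Choosing $i=\lfloor e\alpha_0\rfloor$, Theorem~\ref{thm:bulk} together with Lemma~\ref{lem:interior-support} gives $\mathbb P(A_e=i/e)=e^{-eI(\alpha_0)+o(e)}$, and this is the lower bound. Points outside $[1/3,2/3]$ are handled automatically, because the support lies in $[1/3+O(1/e),\,2/3+O(1/e)]$ and $I=+\infty$ there. Goodness of $I$ is immediate: $I$ is continuous on a compact interval and infinite outside it. Exponential tightness is trivial since $A_e\in[0,1]$.

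\textbf{CLT and moments.} Weak convergence of $Z_e$ follows from \eqref{eq:global-llt}. Summing the local approximation over $|x|\le M\sqrt e$ (a Riemann sum with mesh $\sqrt{32/e}$) gives $\mathbb P(a<Z_e\le b)\to\Phi(b)-\Phi(a)$. The mass of the complement tends to zero, either directly or by tightness via \eqref{eq:count-central-bound}. The odd moments vanish by Proposition~\ref{prop:duality}.

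For the even moments I will prove uniform integrability of $Z_e^{2r}$, or more directly dominated convergence of the lattice sum. On $|x|\le\delta e$, \eqref{eq:count-central-bound}/\eqref{eq:Ue} gives
\[
\mathbb P(X_e=x)\le C e^{-1/2}e^{-c_\delta x^2/e},
\]
so
\[
\sum_{|x|\le\delta e}(x/\sqrt{e})^{2r}\,\mathbb P(X_e=x)
\]
is dominated by a convergent Gaussian Riemann sum, uniformly in $e$. This gives $\sup_e\mathbb E Z_e^{2r+2}\mathbf 1\{|X_e|\le\delta e\}<\infty$. On $|x|>\delta e$, \eqref{eq:count-far-bound} gives total mass $O(e^{D}e^{-\kappa_\delta e})$, while $|Z_e|^{2r}\le (Ce)^{r}$, so this contribution is $o(1)$. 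Uniform integrability combined with weak convergence yields $\mathbb E Z_e^{2r}\to(2r-1)!!$. Taking $r=1$, and using $\mathbb E V_e=(e+2)/2$, gives $\operatorname{Var}(V_e)=\tfrac e{32}\,\mathbb E Z_e^2\sim e/32$.

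\textbf{Main obstacle.} The delicate step is the LDP lower bound near the support boundary. Theorem~\ref{thm:bulk} is only uniform on compacts, and $G$ vanishes at the endpoints, so the argument must approximate boundary points from the interior using continuity of $I$ up to $1/3$ and $2/3$. The moment step is routine once the central Gaussian domination bound from Theorem~\ref{thm:counts} is in hand.
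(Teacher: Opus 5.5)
Your proposal is correct and follows the paper's proof essentially step for step: the full-support entropy point bound with at most $e+1$ atoms for the LDP upper bound, interior approximation via Theorem~\ref{thm:bulk} and Lemma~\ref{lem:interior-support} for the lower bound, Gaussian domination from \eqref{eq:count-central-bound} plus the exponentially small outer region for uniform integrability, and duality for the odd moments. The only differences are cosmetic: you read off the CLT from the sup-norm local limit theorem \eqref{eq:global-llt} rather than from the relative local law on compact windows, and you get uniform integrability through a bounded truncated $(2r+2)$-th moment; also, your $\Phi(b)-\Phi(a)$ clashes with the paper's notation $\Phi=\log4-I$.
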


\begin{proof}
We begin with the speed-$e$ LDP.  The full-support bound used in the proof of
Theorem~\ref{thm:counts}, together with \eqref{eq:Ue}, gives for every feasible
$i+j=e$ and all sufficiently large $e$,
\begin{equation}
 \mathbb P(A_e=i/e)
 \le C_0 e^{19/2}e^{-eI(i/e)}.
 \label{eq:ldp-point-upper}
\end{equation}
There are at most $e+1$ support points.  Consequently, for a closed set $F$,
\[
 \mathbb P(A_e\in F)
 \le C_0 e^{21/2}
 \exp\{-e\inf_{\alpha\in F\cap\operatorname{supp}(A_e)}I(\alpha)\}.
\]
The polynomial prefactor disappears after division of the logarithm by $e$.
Compactness of the effective domain and continuity of $I$ yield the closed-set
upper bound.

For the lower bound let $O$ be open and suppose $\inf_OI<\infty$.
Given $\varepsilon>0$, choose
$\alpha\in O\cap(1/3,2/3)$ with
$I(\alpha)\le\inf_OI+\varepsilon$; if the infimum occurs at an endpoint, take
$\alpha$ from the interior sufficiently close to it.  Let $i_e$ be a nearest
integer to $\alpha e$ and $j_e=e-i_e$.  Since $i_e/e\to\alpha$ and
$\alpha\in(1/3,2/3)$, Lemma~\ref{lem:interior-support} gives
$\widetilde u_{i_e,j_e}>0$ for all sufficiently large $e$.  Thus these are
actual feasible points, and Theorem~\ref{thm:bulk} gives
\[
 \lim_{e\to\infty}\frac1e
 \log\mathbb P(A_e=i_e/e)=-I(\alpha).
\]
Letting $\varepsilon\downarrow0$ proves the open-set lower bound.  Thus $I$ is
a good rate function on $[1/3,2/3]$, extended by $+\infty$ outside.

We next prove the CLT and moment convergence.  Set
$Z_e=X_e/\sqrt{e/32}$ and $\Delta_e=\sqrt{32/e}$.  A fixed bounded
$z$-interval corresponds to $|X_e|=O(\sqrt e)=o(e)$, so
Lemma~\ref{lem:interior-support} makes every point of the shifted lattice in
that interval attainable for all sufficiently large $e$.  The local theorem
therefore gives uniformly
\[
 \mathbb P(Z_e=z)=\varphi(z)\Delta_e(1+o(1)).
\]
Riemann sums therefore converge on compact intervals.  For uniform tail
control, \eqref{eq:count-central-bound} divided by \eqref{eq:Ue} gives, for
$|X_e|\le\delta e$,
\begin{equation}
 \mathbb P(Z_e=z)\le C\Delta_e e^{-c z^2},
 \label{eq:gaussian-domination-Z}
\end{equation}
with constants independent of the parity shift.  The remaining support is
bounded by a fixed polynomial times $e^{-\kappa e}$.  The domination
\eqref{eq:gaussian-domination-Z} proves tightness before passing to the limit,
and the compact Riemann sums then give $Z_e\Rightarrow N(0,1)$.

For any fixed $r\ge1$, multiplying \eqref{eq:gaussian-domination-Z} by
$|z|^{2r}$ and using monotone Gaussian-tail comparison yields a bound
independent of $e$ whose tail tends to zero as the truncation level tends to
infinity.  The exponentially small outer density region contributes
$o(1)$ even after multiplication by the maximal polynomial size of
$|Z_e|^{2r}$.  Hence $|Z_e|^{2r}$ is uniformly integrable, and compact
Riemann-sum convergence gives
\[
  \mathbb E Z_e^{2r}\longrightarrow(2r-1)!!.
\]
Odd centred moments vanish exactly by Proposition~\ref{prop:duality}.  Taking
$r=1$ yields $\operatorname{Var}(V_e)\sim e/32$.  Finally,
\[
 \sqrt e\left(\frac{V_e}{e+2}-\frac12\right)
 =\frac{e}{e+2}\frac{Z_e}{\sqrt{32}}
 \Rightarrow N(0,1/32),
\]
which is \eqref{eq:ruedinger-answer}.
\end{proof}

\section{Endpoint geometry and sharp probabilities}\label{sec:endpoints}

The extreme points of the support have a direct combinatorial meaning.

\begin{proposition}[Support at multiples of three]\label{prop:end-support}
If $e=3m$, $m\ge2$, then
\[
  m+2\le V_{3m}\le 2m.
\]
The lower endpoint is attained by a simplicial (triangulated) polytope; its
dual is cubic and attains the upper endpoint.  For $m=2$ the two endpoints
coincide at the tetrahedron.
\end{proposition}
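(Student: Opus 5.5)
The plan is to get both inequalities from the degree bounds \eqref{eq:support-ineq}, with Euler's identity fixing $V+F=3m+2$. Then I would build explicit simplicial polytopes with $3m$ edges and dualise them.

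\emph{Bounds.} Since $F\le 2e/3=2m$ and $V+F=e+2=3m+2$, we get $V=3m+2-F\ge m+2$. Directly, $V\le 2e/3=2m$. Equality $V=m+2$ forces $F=2m$, that is $3F=2e$. Because every face has at least three sides and the face degrees sum to $2e$, each face is then a triangle. Dually, $V=2m$ forces $3V=2e$, so every vertex has degree three. This already gives the closing sentence of Theorem~\ref{thm:main}(vii), and it shows that $m\ge2$ is needed for the interval to be nonempty.

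\emph{Attainment.} For the lower endpoint I need a simplicial $3$-polytope with $V=m+2$. A simplicial polytope with $V$ vertices has $3V-6$ edges, so this gives exactly $3(m+2)-6=3m$ edges, as required. For $m=2$ take the tetrahedron. For general $m$, stack repeatedly: start from the tetrahedron and place a new vertex just beyond one triangular facet, close enough that it sees only that facet. This is the same stacking operation already used in the model subsection to realise odd edge counts. Each step stays convex and simplicial and adds one vertex and three edges, so after $m-2$ steps we have $3m$ edges and $m+2$ vertices. An alternative is the bipyramid over an $m$-gon, which for $m\ge3$ has $m+2$ vertices and $3m$ edges.

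\emph{Duality.} Proposition~\ref{prop:duality} sends this polytope to a polytope with $3m$ edges and $V=e+2-(m+2)=2m$ vertices. It is simple (cubic), since the faces of the original were all triangles. The same proposition also gives the equality of the two endpoint probabilities. When $m=2$ we have $m+2=2m=4$, so both endpoints are the self-dual tetrahedron.

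\emph{Main obstacle.} There is essentially none; the only point needing care is the geometric realisability of stacking. I would justify it either by choosing the new point in the open region beyond exactly one facet hyperplane, or simply by invoking Steinitz's theorem: stacking preserves planarity and $3$-connectivity of the graph, so the result is the graph of some convex $3$-polytope.
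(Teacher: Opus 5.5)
Your proposal is correct and takes the same route as the paper: the bounds come from $3V\le 2e$ and $3F\le 2e$ together with $V+F=3m+2$, the lower endpoint is attained by $m-2$ stacking steps on a tetrahedron, and polar duality gives the cubic upper endpoint. Your additions are correct refinements rather than a different argument: the equality-case argument that the endpoints force all faces to be triangles (respectively all vertices to have degree three), the bipyramid alternative, and the explicit geometric or Steinitz justification of stacking.
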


\begin{proof}
The inequalities follow from \eqref{eq:support-ineq}.  Starting with a
tetrahedron, stack a new tetrahedron on a triangular face.  Each stacking step
adds one vertex and three edges and replaces one triangular face by three, so
it preserves simpliciality.  After $m-2$ steps the resulting polytope has
$e=3m$ and $V=m+2$.  Its polar dual has the same number of edges and
$V=F=3m+2-(m+2)=2m$, and every vertex of the dual has degree three.
\end{proof}

\begin{theorem}[Endpoint probability]\label{thm:endpoint}
For $m\to\infty$,
\[
 \mathbb P(V_{3m}=m+2)
 =\mathbb P(V_{3m}=2m)
 \sim \frac{6561}{32\sqrt2}\left(\frac4{27}\right)^m.
\]
\end{theorem}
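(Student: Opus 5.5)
The plan is to compute the lower-endpoint probability directly from the enumerative inputs and to obtain the upper endpoint from Proposition~\ref{prop:duality}. The interior formula of Theorem~\ref{thm:bulk} cannot be extrapolated here, since $G(1/3)=0$. At $V_{3m}=m+2$ the shifted indices are $i=m+1$ and $j=2m-1$, with $e=i+j=3m$. The key inputs are the following.
\begin{itemize}
\item Section~\ref{sec:model} stresses that the Bender--Wormald asymptotic \eqref{eq:bw-rooted} holds uniformly as $\max(i,j)\to\infty$ with no compact-ratio restriction.
\item Theorem~\ref{thm:transfer} holds over the complete support, including the endpoints.
\end{itemize}
So \eqref{eq:bw-unrooted} may be applied at this boundary point. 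Theorem~\ref{prop:end-support} guarantees that the point is feasible.

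Substituting into \eqref{eq:bw-unrooted}, the first binomial is $\binom{2i}{j+3}=\binom{2m+2}{2m+2}=1$. This exact collapse is why the $+3$ shifts matter. The second binomial is $\binom{2j}{i+3}=\binom{4m-2}{m+4}$. The rational prefactor is $4\cdot3^5\,ij(i+j)=4\cdot 3^5(m+1)(2m-1)(3m)\sim 5832\,m^3$. I will then evaluate $\binom{4m-2}{m+4}$ by Stirling in two steps.
\begin{itemize}
\item First, $\binom{4m}{m}\sim\sqrt{2/(3\pi m)}\,(256/27)^m$.
\item Second, the fixed shifts cost finite ratios. One has $\binom{n-2}{k}/\binom{n}{k}\to(3/4)^2$ and $\binom{n-2}{k+4}/\binom{n-2}{k}\to 3^4$ when $n=4m$ and $k=m$.
\end{itemize}
Together these give $\binom{4m-2}{m+4}\sim\frac{729}{16}\sqrt{\frac{2}{3\pi m}}\left(\frac{256}{27}\right)^m$.

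Next I divide by the normaliser \eqref{eq:Ue} with $e=3m$, namely $U_{3m}\sim 64^m/\bigl(486\sqrt\pi\,(3m)^{7/2}\bigr)$. The exponential rate becomes $(256/27)/64=4/27$. The powers of $m$ cancel exactly: $m^{-1/2}\cdot m^{-3}\cdot m^{7/2}=1$. The factor $\sqrt\pi$ also cancels. The remaining constant is
\[
\frac{729\cdot 486\cdot 3^{7/2}\sqrt{2/3}}{16\cdot 5832}
=\frac{486\cdot27\sqrt2}{128}
=\frac{6561}{32\sqrt2},
\]
using $5832=8\cdot729$. The equality of the two endpoint probabilities is exact by Proposition~\ref{prop:duality}, because $e+2-(m+2)=2m$.

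The main obstacle is justifying that the uniform $o(1)$ in \eqref{eq:bw-rooted} genuinely applies at the extreme boundary point. At that point one binomial is identically $1$, so there is no Laplace-type smoothing to fall back on. I will rely on the paper's stated reading of the Bender--Wormald uniformity together with the full-support transfer \eqref{eq:q-u}. The remaining steps are bookkeeping, which I will check carefully:
\begin{itemize}
\item the shift ratios for the binomial coefficients;
\item the replacement of $(m+1)(2m-1)$ by $2m^2$;
\item the fact that the exponentially small rooted--unrooted correction does not affect the constant.
\end{itemize}
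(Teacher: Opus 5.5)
Your proposal is correct and follows the paper's proof essentially step for step. Both substitute $i=m+1$, $j=2m-1$ into the unrooted Bender--Wormald formula, use the collapse $\binom{2m+2}{2m+2}=1$ and the same intermediate asymptotic $\binom{4m-2}{m+4}\sim\frac{729}{16}\sqrt{2/(3\pi m)}\,(256/27)^m$, divide by $U_{3m}$, and invoke duality for the upper endpoint; your constant arithmetic also checks out. The paper handles the boundary-uniformity concern you flag the same way you propose, by relying on the stated no-compact-ratio uniformity of the Bender--Wormald asymptotic together with the full-support transfer.
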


\begin{proof}
At the lower endpoint,
\[
  i=m+1,\qquad j=2m-1.
\]
The two shifted binomial factors in \eqref{eq:bw-unrooted} simplify to
\[
 \binom{2i}{j+3}=\binom{2m+2}{2m+2}=1,
 \qquad
 \binom{2j}{i+3}=\binom{4m-2}{m+4}.
\]
Stirling's formula, or comparison with $\binom{4m}{m}$, gives
\[
 \binom{4m-2}{m+4}
 \sim \frac{729}{16}\sqrt{\frac{2}{3\pi m}}
       \left(\frac{256}{27}\right)^m.
\]
Substitution into \eqref{eq:bw-unrooted} therefore yields
\begin{equation}
 \widetilde u_{m+1,2m-1}
 \sim \frac{\sqrt6}{384\sqrt\pi}\,
 m^{-7/2}\left(\frac{256}{27}\right)^m.
 \label{eq:end-count}
\end{equation}
On the other hand, \eqref{eq:Ue} at $e=3m$ gives
\[
 U_{3m}\sim
 \frac{64^m}{486\sqrt\pi\,(3m)^{7/2}}.
\]
Dividing \eqref{eq:end-count} by this normalisation gives
\[
 \frac{\sqrt6}{384\sqrt\pi}\,
 486\sqrt\pi\,3^{7/2}
 \left(\frac{256/27}{64}\right)^m
 =\frac{6561}{32\sqrt2}\left(\frac4{27}\right)^m.
\]
Exact equality of the lower and upper endpoint probabilities follows from
polar duality, not merely asymptotically.
\end{proof}

\section{Discussion, provenance and reproducibility}\label{sec:discussion}

The fixed-edge law is a one-dimensional slice through the classical
vertex--face enumeration.  The slice passes through several asymptotic regimes.
In the bulk, the two shifted binomial coefficients generate the strictly convex
entropy function $I$.  Near the central saddle, the curvature
$I''(1/2)=32$ fixes the variance constant $1/32$, while successive Taylor
terms identify the $e^{3/4}$ Gaussian-relative window and the $e^{5/6}$
quartic window.  At the support boundary the interior prefactor vanishes and
one shifted binomial coefficient collapses to $1$, which is why the endpoint
probability requires a direct calculation rather than extrapolation of the
bulk formula.

Equation~\eqref{eq:ruedinger-answer} resolves the fixed-edge rescaled
limit-distribution question posed by R\"udinger
\cite{Ruedinger2009,Ruedinger2010}.  The local limits, exact moderate tails,
MDP, LDP, moment convergence and endpoint probabilities give a more detailed
description of the same fixed-edge distribution.  Related limit-law results
for other random planar graph and map parameters include
\cite{GimenezNoy2009,ColletDrmotaKlausner2019}.

The rooted two-parameter formula is due to Bender and Wormald
\cite{BenderWormald1988}, and their published unrooted asymptotic uses the
earlier uniform asymmetry/unrooting theorem \cite{BenderWormald1985}.
Section~\ref{sec:transfer} derives the fixed-edge unrooted formula from these
two inputs before the probabilistic analysis.  The exponentially accurate
rooted--unrooted comparison then transfers the resulting probability
statements between the two models.  Earlier fixed-edge total enumeration goes
back to Richmond and Wormald \cite{RichmondWormald1982}.

Within the literature searched, we found no previous explicit statement
combining the fixed-edge CLT with the local-limit, deviation, precise-tail and
endpoint results proved here.  We make no broader claim of priority.

Direct differentiation of \eqref{eq:rate-I} verifies \eqref{eq:Isecond}; expansion
about $1/2$ gives the coefficients $16$, $320/3$ and $23296/15$ in
\eqref{eq:I-taylor}; and $G(1/2)=8$ yields the central point prefactor
$4/\sqrt{\pi e}$.  The normalising constant in \eqref{eq:Ue} follows
independently from the Gaussian lattice sum.  At the endpoint, simplifying the
shifted binomial coefficients before applying Stirling gives the constant in
\eqref{eq:endpoint-prob}.  These computations check the displayed analytic formulae and are not used as
independent inputs to any proof.

\section*{Declarations}

\paragraph{Funding}
This research did not receive any specific grant from funding agencies in the
public, commercial, or not-for-profit sectors.

\paragraph{Declaration of competing interest}
The author declares that he has no known competing financial interests or
personal relationships that could have appeared to influence the work reported
in this paper.

\paragraph{Data availability}
No datasets were generated or analysed for the research described in this
article.  The results are analytic consequences of the published enumeration
formulae cited above.

\paragraph{Declaration of generative AI and AI-assisted technologies in the manuscript preparation process}
During the preparation of this work, the author used ChatGPT (OpenAI) in order
to assist with proof checking, literature organisation, and language editing.
After using this tool, the author reviewed and edited the content as needed,
independently verified the mathematical arguments, calculations, and citations,
and takes full responsibility for the content of the published article.

\end{document}